

\documentclass[EJP]{ejpecp} 



\usepackage{graphicx}  


\SHORTTITLE{Asymptotic behavior of the Brownian frog model} 
   
\TITLE{Asymptotic behavior of the Brownian frog model} 



\AUTHORS{%
  Erin~Beckman\footnote{Duke University.
    \EMAIL{ebeckman@math.duke.edu}}
  \and 
  Emily~Dinan  \footnote{University of Washington \EMAIL{	emdinan1@gmail.com}}
  \and 
  Rick~Durrett \footnote{Duke University. \EMAIL{rtd@math.duke.edu}}
  \and
  Ran Huo \footnote{Duke University.\EMAIL{ran.huo@duke.edu}}
  \and
  Matthew Junge \footnote{Duke Unversity. \EMAIL{jungem@math.duke.edu}}
  }



\KEYWORDS{frog model ; shape theorem ; continuum percolation } 

\AMSSUBJ{60J25} 

\SUBMITTED{December 15, 2017} 
\ACCEPTED{August 21, 2018} 




\VOLUME{0}
\YEAR{2016}
\PAPERNUM{0}
\DOI{10.1214/YY-TN}


\ABSTRACT{We introduce an extension of the frog model to Euclidean space and prove properties for the spread of active particles. Fix $r>0$ and place a particle at each point $x$ of a unit intensity Poisson point process $\mathcal P \subseteq \mathbb R^d - \B(0,r)$. Around each point in $\mathcal{P}$, put a ball of radius $r$.  
A particle at the origin performs Brownian motion. When it hits the ball around $x$ for some  $x \in \mathcal P$, new particles begin independent Brownian motions from the centers of the balls in the cluster containing $x$. Subsequent visits to the cluster do nothing. This waking process continues indefinitely.  For $r$ smaller than the critical threshold of continuum percolation, we show that the set of activated points in $\mathcal P$ approximates a linearly expanding ball. Moreover, in any fixed ball the set of active particles converges to a unit intensity Poisson point process.}


\usepackage{amssymb}
\usepackage{tikz}
\usepackage{pgf}
\usetikzlibrary{decorations.pathmorphing,patterns} 
\usepackage{pgfplots}
\usepgfplotslibrary{fillbetween}
\usepackage{theoremref}
\usepackage{amsmath,dsfont,amsthm}
\usepackage{enumitem}
\usepackage{caption,subcaption}

\newcommand{\E}{\mathbf{E}}
\newcommand{\expE}{\mathbf{E}}
\renewcommand{\P}{\mathbf{P}}
\newcommand{\Pm}{\mathbf{P}}

\newcommand{\Rm}{\mathbb{R}}
\newcommand{\B}{\mathbb{B}}

\newcommand{\f}{\frac}
\newcommand{\no}{\nonumber}
\newcommand{\ind}[1]{\mathbf 1 \{ #1 \}}

\newcommand{\e}{\text{e}}

\newcommand{\ep}{\epsilon}

\usetikzlibrary{calc}


\begin{document}



\section{Introduction}

The \emph{Brownian frog model} starts with a single active particle  at the origin and sleeping particles at the points of a unit intensity Poisson point process $\mathcal{P}\subset \mathbb R^d - \B(0,r)$ with $r > 0$ fixed. Let the cluster associated with $x \in \mathcal{P}$ be the largest component of $\cup_{y \in \mathcal{P}} B(y,r)$ containing $B(x,r)$. Active particles perform independent Brownian motion, and sleeping particles become active the first time an active particle is within $r$ of its cluster. All particles in the cluster wake simultaneously. 

One way to visualize the process is that each particle is a frog on a lily pad of radius $r$. If a frog wakes up, it strays from its starting location according to a Brownian motion and wakes sleeping frogs upon perturbing their cluster of lily pads. 
 To avoid an explosion of active particles we must avoid the possibility of an infinite component of overlapping lily pads. This amounts to taking $r < r_d$ with $r_d$ the critical threshold in continuum percolation on $\mathbb R^d$ (see \cite{meester}).  In this article we prove that the set of activated sites at time $t$ behaves like a ball of radius $\gamma_rt$.  Additionally, we show that the limiting density of awake frogs in a fixed region is Poisson distributed.

This is the continuous analogue of a growing system of random walks known as the \emph{frog model}. The process starts with an active particle at the root of a graph and some number of sleeping particles at all the other vertices. In either continuous or discrete time,  the active particle at the root begins a random walk. Whenever an active particle visits a vertex with sleeping particles all of those particles become active and begin their own independent random walk. 

The frog model was originally proposed as a modification to an information spreading model introduced by K.\ Ravishankar. In its earliest form, the frog model was stated on $\mathbb Z^d$ with one particle-per-site and simple random walk paths. Initially it was known as the egg model, but was shortly thereafter relabeled the frog model. The ``branching," i.e.\ activation of sleeping particles, is constrained by the number of points on each site and the paths of all frogs in the process. This makes for a rather nuanced process that exhibits behavior different from a single or a branching random walk. Likewise, the Brownian frog model has features somewhere between Brownian motion and branching Brownian motion.

Early papers on the frog model consider the process with one particle-per-site on $\mathbb{Z}^d$. The paper \cite{telcs1999} shows that the root is visited infinitely often in all dimensions. Next, \cite{shape, combustion} proved that the set of visited sites has a limiting shape. The latter works in continuous time, so the proofs and theorem statements are slightly different. 
The past two decades have further investigated the process by modifying the graph, the number of frogs per site, and the random paths followed by frogs. For example, \cite{random_shape, random_recurrence} consider the change in transience/recurrence and the limiting shape with a random configuration of frogs. The articles \cite{recurrence, new_drift} consider the frog model on the lattice where particles perform a biased random walk. Another modification to the random walk path is where frogs take a geometric number of steps then perish. Various phase transitions are explored in \cite{phasetree, po2}. The question of transience and recurrence on trees is explored in \cite{HJJ1, HJJ2, JJ3_log, 23tree}. Recent work has looked at the waking behavior on finite trees \cite{hermon}, and the passage time to a distinguished set of vertices in $\mathbb Z^d$ \cite{passage}. The article \cite{HJJ4} establishes linear expansion of the set of activated frogs on regular trees, and \cite{HJJ4_cover} pins down different regimes for rapid and slow cover time for the frog model on finite trees.

 A Brownian frog model on $\mathbb R$ is studied in \cite{josh2}. Active particles have a fixed leftward drift, and are placed according to a Poisson point process with intensity $f(x)$. This paper establishes sharp conditions on $f$ that determine whether the model is transient or not. Note that on the line particles will collide so it is natural to set $r=0$. In higher dimensions it is necessary to set $r>0$.

Symmetry ensures that if the Brownian frog model has a limiting shape it will be a ball, which is not the case in the one-per-site discrete frog model on $\mathbb Z^d$ for large enough $d$ (see \cite[Theorem 1.1]{combustion}). Moreover, \cite[Theorem 1.2]{shape} proves that if enough frogs are placed at each site of $\mathbb Z^d$ then the limiting shape contains a flat edge.  Here we confirm the conjecture from the introduction of \cite{combustion} that there is a limiting shape for the Brownian frog model.  We also set the stage for studying the continuous frog model. The hallmark theorems for the frog model ought to have analogues in the Brownian frog model. However, the different geometry introduces several new  lines of inquiry. It is natural to consider diffusions other than Brownian motion. For instance, an interesting question would be to consider frogs that move according to an Ornstein-Uhlenbeck process $dB_t - \alpha|X_t|\,dt$, translated to the location where they wake up. One can then ask whether the shape theorem holds for this more general continuous frog model. 

The geometry of the overlapping lily pads is another interesting feature of the Brownian frog model. As $r$ nears the threshold in continuum percolation there will be large clusters of overlapping lily pads. These can cause big jumps that take no time. Like bond percolation, continuum percolation is best understood when $d=2$. It is known that clusters are almost surely finite in this dimension \cite{meester}. Nonetheless, they have infinite expected size. How the frog model expands at criticality remains open. It may be a very hard question. Like in bond percolation on $\mathbb Z^d$, it is a major open problem to show that clusters are almost surely finite at criticality in higher dimensions. A more tractable further question is to show that the speed of expansion $\displaystyle \lim_{r \to r_d^{-}}\gamma_r = \infty$ for all $d\geq 2$. 



\subsection*{Formal description and theorem statements}
Fix $r >0$. Let $\mathcal P \subseteq \mathbb R^d - \B(0,r)$ be a unit intensity Poisson point process. We let $\{ (B_t^x)_{t \geq 0} \colon x \in \mathcal P \cup \{0\} \}$ be a family of independent Brownian motions, with $B_0^x = x$. For $x,z \in \mathcal P \cup \{0\}$ define $t(x,z) = \inf \{ t \colon \| B_t^x - z \| \leq r \}$ to be the first time the path started at $x$ is within $r$ of $z$.  Following \cite{shape}, we define the passage time to $z$ for the process started at $x$ as
	\begin{align}
		T(x,z) = \inf \Bigl \{ \sum_{ \{x_1,\hdots, x_k\} \subseteq \mathcal P} t(x_i,x_{i+1}) \colon  x_1 = x \text{ and } x_k = z \text{ for some $k$} \Bigr\}. \label{eq:T}
	\end{align}
We define $Z_z^x(t)$ to be the location at time $t$ of the particle initially at $z$ in the process started from $x$. This can be written in terms of $T(x,z)$:
\begin{align}
Z_z^x(t) = \begin{cases}
 		z, & T(x,z) \geq t \\
 		B_{t - T(x,z)}^z, & T(x,z) < t
 \end{cases}	\label{eq:Z}.
\end{align}
Now let $\Gamma(x_0, \mathcal P)\subseteq \mathbb R^d$ be the \emph{continuum percolation cluster of $\mathcal P$ containing $x_0$}. This is obtained recursively by setting $\Gamma(x_0,\mathcal P) = \bigcup_{i=1}^\infty \Gamma_i(x_0,\mathcal P)$ where
$\Gamma_0(x_0,\mathcal P) = \{x \in \mathcal P \colon \|x-x_0\| \leq r\}$, and for $i \geq 1$ 
	\begin{align*}
				\Gamma_{i}(x_0,\mathcal P) = \{ x \in \mathcal P\colon \|x - \Gamma_{i-1}(x_0,\mathcal P)\| \leq r\} .		
	\end{align*}
Define, for $x \in \mathcal{P}$, 
$$\xi_t^x = \{ z \in \mathcal P \cup \{0\} \colon T(x,y) \leq t \text{ for some } y \in \Gamma(z,\mathcal{P}) \}$$
to be the set of activated sites from $\mathcal P \cup \{0\}$ provided the initially active particle is at $x$. For convenience we set $\xi_t^0 = \xi_t.$ Define $A_t = \{ Z_z(t) \colon 	z \in \xi_t \}$ to be the locations of active frogs at time $t$.

\begin{theorem} \thlabel{thm:main}
For all $d \geq 1$, given $0<r< r_d$ there exists $0<\gamma_r<\infty$ such that for all $\epsilon >0$
$$\mathbb B(0,\gamma_r(1-\epsilon)t) \cap \mathcal P \subseteq \xi_t \subseteq \mathbb B(0,\gamma_r(1+\epsilon)t)$$
almost surely for sufficiently large $t$.
\end{theorem}
\medskip
We will give an overview of this proof momentarily. First, we state our other theorem. 
This is the continuous analogue of 
\cite[Theorem 1.3]{combustion}. Our result is stronger though, because we allow the region that behaves like a Poisson point process to grow linearly.

\begin{theorem} 
\thlabel{thm:ppp} 

For any $\ep>0$ the locations of awake frogs in $\B(0,\gamma_r(1-\ep)t)$ converge to a Poisson process. To be precise, if we let $v \in \B(0,\gamma_r(1- \epsilon))$
then the distribution of frogs in the system shifted by $vt$ converge to a Poisson process.
\end{theorem}

\subsection*{Outline of \thref{thm:main}}

As in \cite{shape,combustion} this result relies on the sub-additive ergodic theorem. Although the outline is similar to these previous works, the techniques we use are different. The proof of \thref{thm:main} relies on three propositions. We list them along with a sketch of each proof. The most important is the existence of a limiting speed in each direction.

\begin{proposition} \thlabel{prop:ergodic}
Fix a unit vector $v$ and let $L_{ v} = \{t  v \colon t \geq 0\}$ be the ray through $ v$ emanating from $x_0=0$. Let $x_n$ be the point in $\mathcal P$ nearest to $n v$ for $n \geq 0$. We have $$\lim_{n \to \infty} \f {T(x_0,x_n)}{n} = \tilde \gamma_r \in [0,\infty).$$ 
Note that the time-constant $\tilde \gamma_r$ does not depend on $ v$. 
\end{proposition}
This is a standard application of Liggett's subadditive ergodic theorem, shown below as \thref{thm:liggett}. The main difficulty is showing that $\E T(0,x_1)$ is finite, but this follows from \thref{lem:step1} which is the first step of proving \thref{prop:lower}. 

In \cite{shape} the frog model can grow at most linearly, and in the continuous time setting of \cite{combustion} it is a minor nuisance, but not a serious obstacle to show the process stays contained in a linearly expanding ball. A concern for the Brownian frog model is that overlapping clusters of frogs all wake simultaneously. So, we might have large jumps that take no time. This introduces some extra work to prove containment in a linearly expanding ball.
 
\begin{proposition}\thlabel{prop:upper}

For $0 < r < r_d$ there exists $R_1<\infty$ and $c_1>0$ such that  $$\P[\xi_t \subseteq  \B(0,R_1t)] \geq 1- e^{-c_1t} $$ for all $t\geq 0$.	
\end{proposition}

We dominate the frog model by a branching process that resets the explored regions at each branching event. We rely on large deviation estimates for the cluster size in continuum percolation and the displacement of Brownian motion. It takes some care to create a branching process that grows slowly, and couples to the Brownian frog model.

A standard ingredient in proving a shape theorem is that the process grows at least linearly. 
\begin{proposition}\thlabel{prop:lower}

Let $\overline \xi_t$ be the convex hull of the activated points in $\mathcal{P}$ at time $t$. For $0 < r < r_d$, all $\epsilon >0$, and all $m \geq 1$ there exists an almost surely finite $t_m$ such that  $$\P[\B(0,(\gamma_r - \epsilon) t) \subseteq \overline \xi_t] \geq 1 - t^{-m}$$ for all $t \geq t_m$.	
\end{proposition}
Our proof is somewhat simpler than what was done in
 \cite{shape} and \cite{combustion}. 
We write wvhp as short for ``with very high probability," which means that the probability of failure goes to 0 faster than $n^{-m}$ for any $m$. We break $[-n,n]^d$ into small cubes of size $n^{1-\beta}$ and show that by time $n^{2+\ep}$ wvhp a frog has been awoken in each small cube. Given $a>0$ we can pick $D$ independent of $n$ so that by iterating this construction $D$ times, we have an awoke frog in each of a collection of cubes with side $n^a$. These frogs are enough to finish waking the rest of the frogs in $[n,n]^d$ by time $(D+1)n^{2+\epsilon}$. We then use this lemma and the approximate independence of distant frogs to show that this is enough to hit every point in the ball expanding at rate $\gamma_r - \epsilon$.

\subsection*{Organization of paper}
Section \ref{sec:shape} establishes the existence of a time-constant $\tilde \gamma_r$ in \thref{prop:ergodic} and uses this along with Proposition \ref{prop:upper} and \ref{prop:lower} to deduce \thref{thm:main}. 
Section \ref{sec:finite} contains the proof of \thref{prop:upper}, that the Brownian frog model is contained in a linearly expanding ball. In Section \ref{sec:linear} we prove that the Brownian frog model contains a linearly expanding ball in \thref{prop:lower}. Section \ref{sec:ppp} proves that the locations of active frogs converge to a Poisson process.

\section{Existence of a limiting shape}
\label{sec:shape}

We deduce a limiting speed by showing that the passage times between particles on a fixed ray from the origin satisfy the hypotheses of Liggett's subadditive theorem. 
Deducing a limiting shape for the Brownian frog model is slightly more convenient than the discrete frog model, since it is already defined on $\mathbb R^d$. This removes the need to interpolate from $\mathbb Z^d$ to $\mathbb Q^d$ to $\mathbb R^d$. Additionally, radial symmetry ensures that the limiting speed in each direction is the same, and so the final shape must be a ball. 
 We begin by stating Liggett's subadditive ergodic theorem, then showing the passage times along a ray satisfy the hypotheses. 

\begin{theorem}[Liggett's subadditive ergodic theorem] \thlabel{thm:liggett} Suppose that $\{Y(m,n)\}$ is a collection of positive random variables indexed by integers satisfying $0 \leq m <n$ such that 
	\begin{enumerate}[label = (\roman*)]

    \item $Y(0,n) \leq Y(0,m) + Y(m,n)$ for all $0 \leq m < n$ (subadditivity);
    \item The joint distribution of $\{ Y(m+1, m+k +1), k \geq 1 \}$ is the same as that of $\{ Y(m, m+k), k \geq 1 \}$ for each $m \geq 0;$
    \item For each $k \geq 1$ the sequence of random variables $\{ Y(nk, (n+1)k), n \geq 1 \}$ is a stationary ergodic process;
    \item $\E Y(0,1) < \infty.$
\end{enumerate}
Then
$$\lim_{n \to \infty} \f{ Y(0,n) }{ n} \to \gamma \qquad a.s.,$$
where $\gamma = \inf_{ n \geq 0 } \f{ \E Y(0,n) } n.$

\end{theorem}

\begin{proof}[Proof of \thref{prop:ergodic}]

It suffices to prove that the collection $\{T(x_m,x_n)\}_{0 \leq m <n}$ satisfies (i)-(iv) of \thref{thm:liggett}. 
%
For (i), we claim that 
$$T(x_m,x_n) \leq T(x_m,x_{\ell}) + T(x_{\ell},x_n) \text{ for all $0 \leq m \leq \ell <n $}.$$ If site $x_n$ is reached before site $x_\ell$ (so that $T(x_m,x_n) \leq T(x_m,x_\ell))$, there is nothing to prove. If that does not happen and $T(x_m,x_n) > T(x_m,x_\ell)$ then the process which began with only the frog at site $x_\ell$ awake can be coupled to the process originating from site $x_m$, which might have other particles awake at time $T(x_m,x_\ell)$. Thus the remaining time to reach site $x_n$ for the original process can be at most $T(x_\ell,x_n)$, proving (i).
The second point, (ii), follows from radial symmetry and translation invariance of the initial configuration. 

 To get (iii), for simplicity assume that $L_v$ is the nonnegative $x$-axis in two dimensions. Let $\Pi_m$ be the Poisson points in the region $(m - 1/2, m + 1/2) \times \mathbb R$.
 This is an i.i.d.\ sequence and hence ergodic. Moreover, $T(x_{nk}, x_{(n+1)k})$ is a shift invariant function of the $\Pi_m$ and hence is ergodic by \cite[Theorem 7.1.3]{durrett}. 
 
 Finally, to see (iv), for $d \geq 2$ we apply \thref{lem:step1}. This guarantees the the frog process contains a ball of radius $\Omega(t^{1/2-\epsilon})$ for small $\epsilon$ with very high probability. Here $\Omega(t^{1/2-\epsilon})$ means that $f(t) = \Omega(g(t))$ if and only if $\lim_{t\to\infty} \left|{g(t)}/{f(t)}\right| = 0$. Thus, the expected time to reach $x_1$ must be finite. For $d=1$, let $\tau$ be the first return to the origin of a Brownian motion. Since $\P[\tau > t ] \leq C t^{-1/2}$ and in a random time with exponential tail we will have at least three awakened particles moving independently, we can conclude that $\E T(0,x_1) < \infty.$  
\end{proof}

Now we explain how the existence of $\gamma_r$ gives a limiting shape.

\begin{proof}[Proof of \thref{thm:main}]
The existence of $0<	\gamma_r\leq	\infty$ follows from \thref{prop:ergodic}. It is finite by \thref{prop:upper}, and positive by its definition: $\tilde \gamma_r = \inf_{n \geq 1} \E T(0, x_n)/n. < \infty$. When $d=1$ this immediately yields the shape theorem. For $d\geq 2$ a standard shape theorem argument (see \cite{shape_example, durrett_shape, shape, combustion}) where we cover $\B(0,\gamma_r)$  with finitely many balls of radius $\epsilon' R_1>0$ and scale by $t$ while applying \thref{prop:lower} gives that $\overline \xi_t \subseteq \B(0, (\gamma_r + \epsilon) t)$ for all large $t$. 
The fact that $\B(0, (\gamma_r - \epsilon) t) \subseteq \overline \xi_t$ is proven in \thref{prop:lower}.  
\end{proof}

\section{Containment in a ball} \label{sec:finite}

We will show that the set of activated sites does not grow super-linearly. The idea is to dominate it by a branching process that places particles according to the geometry of a cluster in (subcritical) continuum percolation.
We begin with an informal description. 

The initial particle at $0$ starts  inside an empty ball of radius $r<r_d$ in the environment $\mathcal P$. It performs Brownian motion until it comes within $r$ of a point $x_0 \in \mathcal P$ at time $\tau$. 
This particle has explored the Brownian sausage around $B_t$ for times in $[0,\tau)$ and found no points from $\mathcal P$ in it. To preserve a coupling with the frog model we need to, whenever possible, use the points in $\mathcal P$. Accordingly, we refill this explored region with a fresh Poisson point process, and then sample the continuum cluster at $x_0$. If we skipped this replenishment of points, then future clusters we sample would have some dependence on $\tau$ and not be identically distributed. 
After replenishing, the continuum cluster at $x_0$ will contain $K$ additional centers of balls: $x_1,\hdots, x_K$. The process then branches into $K+2$ offspring. The `$+2$' comes from the initial particle at $0$ which is placed at $x_{-1} = B_\tau$,  and the particle at $x_0$. 

For each $x_i$ we want to reproduce the environment seen by the particle initially at 0. To do this we need to refill the parts of the continuum cluster outside of $\B(x_i,r)$ with a fresh Poisson point process. We use the same points for all of the $x_i$, but fill in slightly different regions with them. 
Now, we let the particle at each $x_i$ explore the modified $\mathcal P$ environment 
until it starts a new branching event which occurs in the same manner. The particles from the $x_i$ are blind to the discoveries of other particles. So it is possible that branching events occur multiple times at the same points. This introduces considerable dependence, but we will see that there is still independence with the branching times and offspring down a single lineage. This is enough for us to deduce that it is a.s.\ contained in a linearly expanding ball.

%

\subsection{Formal description and properties}

We need a way to make branching identically distributed while remembering the points of $\mathcal P$. To do this we will refill explored areas with points from $(\mathcal P_i)_{i \geq 1}$, a collection of independent unit Poisson point processes on $\mathbb R^d$.

%
Recall the definition of $\Gamma(x_0, \mathcal P)\subseteq \mathbb R^d$ the \emph{continuum percolation cluster of $\mathcal P$ containing $x_0$}. It is obtained recursively by setting $\Gamma(x_0,\mathcal P) = \bigcup_{i=1}^\infty \Gamma_i(x_0,\mathcal P)$ where
$\Gamma_0(x_0,\mathcal P) = \{x \in \mathcal P \colon \|x-x_0\| \leq r\}$, and for $i \geq 1$ 
	\begin{align*}
				\Gamma_{i}(x_0,\mathcal P) = \{ x \in \mathcal P\colon \|x - \Gamma_{i-1}(x_0,\mathcal P)\| \leq r\} .		
	\end{align*}
We let $\Gamma = \Gamma(0,\tilde{\mathcal P})$ be the cluster at the origin for $\tilde{\mathcal P}$ an independent unit intensity Poisson point process.  We define $\Gamma$ in terms of $\tilde{\mathcal P}$ because $\mathcal P$ has a ball at the origin removed.

  Let $\tau \in [0,\infty]$ be the time for a Brownian motion started at the origin to be within $r$ of a point in $\mathcal P$. Formally,
  $$\tau = \inf \{ t \colon \| B_t - \mathcal P \| = r \}.$$
%
 Recall that $\{ (B_t^x)_{t \geq 0} \colon x \in \mathcal P \cup \{0\}\}$ is the family of Brownian motion paths followed by frogs in the Brownian frog model. Let $W_t^x = \cup_{0\leq s \leq t} \B(B_s^x,r)$ be the Brownian sausage of radius $r$ generated by $(B_s^x)_{0 \leq s \leq t}$.
 
Now we describe the manner in which particles branch.  At time $\tau$ the particle started from the origin has revealed that $\mathcal P \cap W_{\tau} = \varnothing$ and that $\|B_\tau - x_0\| = r$ for some $x_0 \in \mathcal P$. Replace the points in $\mathcal P - W_{\tau}$ with $\mathcal P_1 \cap W_{\tau}.$ That is, form the set
 		$$\mathcal P' = (\mathcal P - W_{\tau}) \cup (\mathcal P_1 \cap W_{\tau}).$$
 		\thref{lem:surgery} in  the appendix describes this construction in more detail and establishes that the process $\mathcal P'$ (we call it $\eta$ in the appendix) is a unit intensity Poisson point process. 
	Let $\Gamma(x_0, \mathcal P')$ be the new continuum cluster at $x_0$. We place offspring at $x_{-1} := B_\tau$ and at the centers of the balls in this cluster. More precisely, at the points $$\Gamma(x_0, \mathcal P') \cap \mathcal P'=\{x_{-1}, x_0,x_1,\hdots, x_K\}$$
	where $K+1$ is the number of balls in the cluster. See Figure \ref{fig:branch}.

\begin{figure}
\centering 
\includegraphics[scale=.5]{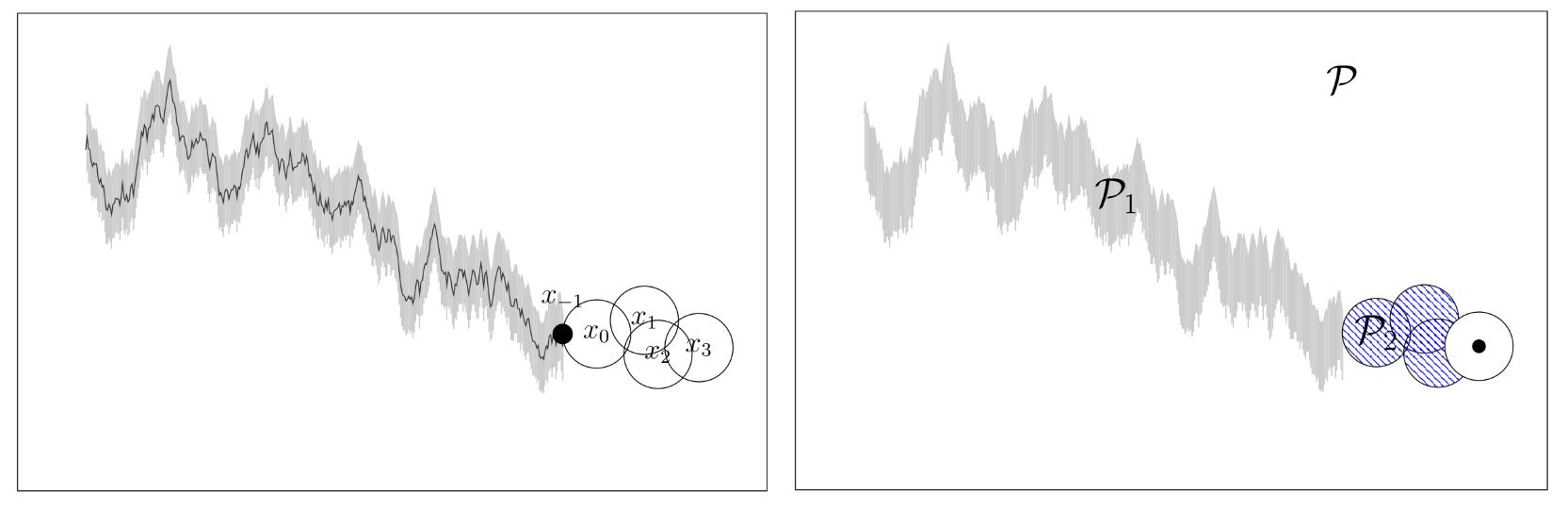}
\caption{The particle at the origin moves until it reaches a cluster. This causes a branching event that places children at $x_0, \hdots, x_K$. In this example $K=3$. The left panel shows the locations of the offspring. The right panel shows the environment that $x_3$ sees at birth. The region explored by the parent particle, $x_{-1}$, is filled by an independent Poisson point process $\mathcal{P}_1$, and the offspring cluster minus $B(x_3,r)$, shown with shaded lines, is filled with another independent Poisson process $\mathcal{P}_2$. The completely unexplored regions of the space still contain the original point process $\mathcal{P}$.}

%
\label{fig:branch}
\end{figure}

	 To ensure the environment looks the same in distribution for each new particle, we replace $\mathcal P' \cap \Gamma(x_0, \mathcal P')$ with $\mathcal P_2 \cap \Gamma(x_0, \mathcal P').$ For the particle at $x_i$ we also delete any points in $\B(x_i,r)$ from the process. This makes it so each new particle has the same environment to explore as the initial particle at the origin. Formally, each particle in $\{x_{-1},x_0,x_1, \hdots, x_K\}$ will move according to a Brownian motion in the environment 
	$$\mathcal P''_i = \Bigl[(\mathcal P' - \Gamma(x_0, \mathcal P') ) \cup (\mathcal P_2 \cap \Gamma(x_0, \mathcal P') )\Bigr] - \B(0,x_i), \qquad i=-1,0,1,\hdots,K.$$
	Notice that invariance again ensures that $\mathcal P''_i$ is a unit intensity point process on $\mathbb R^d - \B(0,x_i)$. 
	
	Now, each particle will explore $\mathcal P''_i$. The particles do not see the areas explored by their siblings. When the particle started at $x_i$ comes within $r$ of a point in $\mathcal P''_i$ it will, independent of its siblings, replace the Brownian sausage it carved out with an unused $\mathcal P_j$. We place $2+K$ offspring as before in a $\Gamma$ distributed way, then refill the cluster with $\mathcal P_{i+1}.$ 
	 All particles proceed to branch in this manner indefinitely. 
	
	Let $\zeta_t$ be the birth locations of all particles active at time $t$ in the branching process defined above. We collect a few facts regarding $\zeta_t$. The most important is that it dominates the Brownian frog model.
	
	\begin{lemma} \thlabel{lem:coupling}
 
There exists a coupling such that $\xi_t \subseteq \zeta_t$ for all $t \geq 0$. 
\end{lemma}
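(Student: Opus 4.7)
The plan is to construct both processes on the same probability space using the frog-model Brownian motions $\{B^y\}_{y\in\mathcal P\cup\{0\}}$, and to maintain inductively a correspondence between each activated frog and an active lineage of particles in the branching process. The root particle in the branching process follows the same path $B^0$ as the initial frog, so the first branching time coincides with the first wake-up time $\tau$. The new frogs activated at time $\tau$ lie in $\Gamma(x_0,\mathcal P)\cap\mathcal P$, while the offspring of the branching event sit at $\Gamma(x_0,\mathcal P')\cap\mathcal P'$; since the original cluster lies entirely outside $W_\tau$ and $\mathcal P'$ only adds new points inside the sausage, $\Gamma(x_0,\mathcal P)\cap\mathcal P\subseteq\Gamma(x_0,\mathcal P')\cap\mathcal P'$. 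For each newly woken frog $y$ we assign its Brownian motion $B^y$ to the corresponding offspring particle in the branching process.

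We iterate this construction inductively over the wake-up times $T_1<T_2<\cdots$ in the frog model. The inductive invariant we wish to maintain is that at time $T_n$, every frog $z\in\xi_{T_n}$ has a live lineage in the branching process, the current member of which sits at the same position as frog $z$ and follows the remainder of $B^z$. At the $(n+1)$-th wake-up, frog $z^*$ encounters a sleeping cluster $C=\Gamma(y^*,\mathcal P)\cap\mathcal P$ at time $T_{n+1}$. The current branching-process member of the lineage of $z^*$ is at the same position, and its local environment $\mathcal P''$ agrees with $\mathcal P$ outside the lineage's explored region. Because $C$ lies in unexplored territory---distinct clusters of $\mathcal P$ have disjoint $r$-neighborhoods in $\mathbb R^d$---we obtain $C\subseteq\Gamma(y^*,\mathcal P'')\cap\mathcal P''$, so each point of $C$ is added as a birth location to $\zeta_{T_{n+1}}$.

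To maintain the inductive invariant across ``spurious'' branching events---those triggered by refilled points from some $\mathcal P_j$ rather than by points of $\mathcal P$---we use the Markov property of Brownian motion: whenever a lineage splits at its parent's current location $x_{-1}$, we drive the offspring at $x_{-1}$ with the continuation of the parent's path. The lineage thus continues to track $B^z$ relative to the original birth location $z$, and spurious branchings only enlarge $\zeta_t$. We expect the main obstacle to be the bookkeeping of explored regions along each lineage: one must verify that the refilling procedure keeps every freshly discovered frog-model cluster disjoint from all previously explored sausages and refilled clusters along that lineage, so that the cluster-inclusion $C\subseteq\Gamma(y^*,\mathcal P'')\cap\mathcal P''$ propagates indefinitely.
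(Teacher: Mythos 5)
Your plan is essentially the paper's argument: couple the two processes by giving shared particles the same Brownian motions $B^z$, let the refilled Poisson points only introduce \emph{extra} (independent) particles, and invoke monotonicity of the process under adding particles --- the paper's own proof is just a three-sentence statement of exactly this, so your write-up is if anything more careful than the original. The one inaccuracy is in the invariant you flag at the end: a freshly discovered frog-model cluster need \emph{not} be disjoint from previously refilled regions (and distinct clusters of $\mathcal P$ need not have disjoint $r$-neighborhoods), but this disjointness is not what you need; the correct propagating fact is that any point of $\mathcal P$ deleted during a refill belonged to the cluster captured at that branching event, hence was already registered as a birth location in $\zeta$, so overlaps with explored territory cost nothing.
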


\begin{proof}
At all steps we use the original point process $\mathcal P$. Additionally we couple the Brownian paths in the canonical way; the frogs in both processes use the same Brownian motions, and extra frogs added into the dominating process use independent Brownian motions. When we refill explored regions of $\mathcal P$ with independent points, we are adding additional Brownian motions to the process. Since the process becomes monotonically larger when more particles are introduced, this ensures that $\xi_t  \subseteq \zeta_i$ for all $t$. 
\end{proof}

	Let $Z_n$ be the set of birthplaces for particles in the $n$th generation of the process. For a point $z_n \in Z_n$ let $z_{n-1},\hdots,z_1, z_0$ be its ancestry back to the original active particle at the origin. So, $z_{n-i}$ is the starting location of the $i$th ancestor of $z_n$. Set $\tau_i$ to be the time between the birth of $z_{i}$ and $z_{i+1}$. 
	
	\begin{lemma} \thlabel{lem:tau}
		$\tau_i \overset{d} = \tau$ and $\tau_0,\hdots, \tau_n$ are independent.
	\end{lemma}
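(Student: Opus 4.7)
The plan is to establish the statement by induction on the ancestry index, strengthening the claim to: \emph{the environment-trajectory pair $(\mathcal{P}''_{z_{i+1}} - z_{i+1},\,(B^{z_{i+1}}_{t} - z_{i+1})_{t\geq 0})$ has the distribution of $(\mathcal{P},\,(B_t)_{t\geq 0})$ and is independent of $(\tau_0,\hdots,\tau_i)$.} Because $\tau_{i+1}$ is a translation invariant deterministic functional of this pair (the first time the trajectory enters the $r$-neighborhood of the point process), both assertions of \thref{lem:tau} follow immediately from this stronger hypothesis.

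The base case is essentially tautological: the particle at $z_0=0$ sees environment $\mathcal{P}$ and runs an independent Brownian motion $(B^0_t)$, so $\tau_0$ is distributed as $\tau$. For the inductive step, suppose the hypothesis holds at stage $i$. When $z_i$'s particle hits $x_0 \in \mathcal{P}''_{z_i}$ at time $\tau_i$, the child environment $\mathcal{P}''_{z_{i+1}}$ is assembled from three disjoint pieces: (a) the portion of $\mathcal{P}''_{z_i}$ lying outside both the Brownian sausage $W^{z_i}_{\tau_i}$ and the cluster $\Gamma(x_0,\mathcal{P}')$; (b) a fresh $\mathcal{P}_j$ restricted to the sausage; and (c) a fresh $\mathcal{P}_{j+1}$ restricted to the cluster, minus $\B(z_{i+1},r)$. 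Two successive applications of \thref{lem:surgery} identify this as a unit intensity Poisson point process on $\R^d - \B(z_{i+1},r)$, and translating gives a process distributed as $\mathcal{P}$. The trajectory $(B^{z_{i+1}}_{t})$ is by construction a fresh Brownian motion from $z_{i+1}$, so the joint distribution matches that of $(\mathcal{P},(B_t))$.

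For the independence half, the refill Poisson processes $\mathcal{P}_j, \mathcal{P}_{j+1}$ carry indices never reused at earlier branchings, so they are independent of the $\sigma$-field generated by the process up to time $\sum_{k \leq i} \tau_k$, which in particular contains $(\tau_0,\hdots,\tau_i)$. Combined with the inductive hypothesis, this shows the refilled environment is independent of $(\tau_0,\hdots,\tau_i)$. Since the Brownian motion $B^{z_{i+1}}$ is also independent of everything preceding the $(i+1)$st branching, the paired independence claim follows. The main technical subtlety is verifying that \thref{lem:surgery} still produces a Poisson process conditionally on the history revealed by $z_i$'s exploration, i.e.~conditionally on $\tau_i$ and on the path $(B^{z_i}_s)_{s\leq\tau_i}$, but that is precisely what the surgery lemma is designed to handle, and once granted the remainder is straightforward book-keeping with indices.
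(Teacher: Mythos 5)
Your proposal is correct and follows essentially the same route as the paper: the paper's (much terser) proof likewise argues that each particle starts its excursion in a fresh unit intensity Poisson environment on $\R^d - \B(z_i,r)$ by virtue of the sausage/cluster refills and \thref{lem:surgery}, giving $\tau_i \overset{d}{=} \tau$, with independence coming from the fact that the explored regions are replaced by point processes with previously unused indices. Your version merely makes the induction hypothesis and the two applications of the surgery lemma explicit, which is a faithful elaboration rather than a different argument.
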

	
	\begin{proof}

		When a particle begins a Brownian motion from $z_i$  it does so in a unit Poisson point process, $\mathcal{P}''_i$ on $\mathbb R^d-\mathbb{B}(z_i,r)$. Thus, the time to be within $r$ of a point in $\tilde { \mathcal P}$ is distributed like $\tau$. The $\tau_i$ are independent since we have no information about the unexplored areas of $\mathcal{P}''_i$, and replace the areas covered by the Brownian sausage around the path started from $z_i$ with an independent Poisson point process. 
	\end{proof}

For $1 \leq i \leq n$	let $L(z_{i-1}) \in \mathbb R^d$ be the center of the first ball reached by the particle started at $z_{i-1}$ during a branching event. Let $X_{i}$ be the number of new particles placed around $Z(z_{i-1})$ when branching occurs. 
	\begin{lemma} \thlabel{lem:K}
		It holds that $X_{i} \overset{d} = 2 + K_i$ with $K_i \overset{d} = K$ as in the definition of the branching process.  The particles counted by $X_i$ are contained in a neighborhood of $L(z_i)$ distributed like a continuum percolation cluster $\Gamma_{i}+L(z_i)$, plus a point on the boundary of the ball around $L(z_i)$. 
	\end{lemma}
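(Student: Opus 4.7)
The plan is to unwind the construction of the branching process and identify each claim with the corresponding step in that construction. The key technical input is the surgery lemma \thref{lem:surgery}, which says that refilling a revealed Brownian sausage with an independent unit intensity Poisson point process yields a unit intensity Poisson point process.

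First I would condition on the full history up to the moment the $(i-1)$st-generation particle $z_{i-1}$ first comes within $r$ of some point $L(z_{i-1})$. By construction the particle started at $z_{i-1}$ explores its own environment $\mathcal{P}''_{i-1}$, a unit intensity Poisson point process on $\mathbb{R}^d - \B(z_{i-1}, r)$. At the hitting time $\tau_{i-1}$, the particle has revealed only that its Brownian sausage $W^{z_{i-1}}_{\tau_{i-1}}$ contains no points of $\mathcal{P}''_{i-1}$ and that $L(z_{i-1})$ lies at distance exactly $r$ from $B^{z_{i-1}}_{\tau_{i-1}}$.

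Next, following the construction, refill $W^{z_{i-1}}_{\tau_{i-1}}$ with an independent unit intensity Poisson point process to form the configuration used to sample the cluster. The surgery lemma gives that this refilled configuration is again a unit intensity Poisson point process, independent of the pre-$\tau_{i-1}$ history. By translation invariance of the Poisson law, the continuum cluster containing $L(z_{i-1})$ is then equal in distribution to a translate $\Gamma_i + L(z_{i-1})$ of the reference cluster $\Gamma$; in particular, the number of balls it contains equals $1 + K_i$ with $K_i \overset{d}{=} K$.

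Finally I would apply the branching rule. It places offspring at the parent's terminal position $B^{z_{i-1}}_{\tau_{i-1}}$, which lies on $\partial \B(L(z_{i-1}), r)$, together with the centers of the $1+K_i$ balls in the cluster, yielding $X_i = 2 + K_i$. The containment statement is then immediate: the cluster centers lie inside $\Gamma_i + L(z_{i-1})$ by definition, and the extra offspring sits on the boundary of $\B(L(z_{i-1}), r)$. The only real obstacle is the bookkeeping required to verify that the refilled process is independent of the past so that the sampled cluster is honestly an independent copy of $\Gamma$; this is exactly what the surgery argument in the appendix provides, after which the proof reduces to chasing the definitions introduced in the construction.
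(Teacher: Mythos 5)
Your proposal is correct and follows essentially the same route as the paper, whose entire proof of this lemma is the remark that ``the reasoning is similar to the proof of \thref{lem:tau}'' --- i.e.\ exactly the argument you spell out: the particle at $z_{i-1}$ explores a fresh unit intensity Poisson environment, the surgery lemma (\thref{lem:surgery}) makes the refilled configuration an independent unit Poisson process, and translation invariance identifies the sampled cluster with $\Gamma_i + L(z_{i-1})$, giving $X_i \overset{d}{=} 2 + K_i$ with the extra offspring at the parent's terminal position on $\partial \B(L(z_{i-1}),r)$. Your write-up is simply a more detailed version of what the paper leaves implicit.
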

	
\begin{proof}
	The reasoning is similar to the proof of \thref{lem:tau}.
\end{proof}

\subsection{Large deviations in the branching process}

We break the proof for the upper bound into three steps. In \thref{lem:avefro}, we estimate the expected number of frogs in generation $n$. In \thref{lem:maxgen}, we give an upper bound on the maximal generation present in the population at time $t$. Then we combine the second result with large deviation estimates to bound the maximal distance an active particle has traveled up to time $t$. 

\begin{lemma}\thlabel{lem:avefro}
Recall that $Z_n$ is the number of particles in the $n$th generation. Let $\mu = \E X_1$ with $X_1$ as in \thref{lem:K}. So long as $r<r_d$ it holds that $\mu < \infty$ and $EZ_n=\mu^n$. 
\end{lemma}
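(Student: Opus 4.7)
The plan has two pieces: (i) show $\mu<\infty$ using subcriticality of continuum percolation, and (ii) deduce the formula $\E Z_n = \mu^n$ from a standard Galton--Watson calculation, where the required independence/stationarity comes from the refilling construction.

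For (i), by \thref{lem:K} we have $X_1 \overset{d}{=} 2 + K$ where $K$ counts the number of additional centers in $\Gamma = \Gamma(0,\tilde{\mathcal P})$, the continuum percolation cluster at the origin. Since $r < r_d$, we are strictly subcritical for continuum percolation on $\R^d$, and a standard result (Meester--Roy~\cite{meester}, analogous to Menshikov / Aizenman--Barsky in the Bernoulli setting) gives $\E |\Gamma| < \infty$. Hence $\E K < \infty$ and $\mu = 2 + \E K < \infty$. I would simply cite this as a black box.

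For (ii), the branching construction is designed so that when a particle at $z \in Z_n$ begins its life, it sees a fresh unit Poisson point process $\mathcal P''$ on $\R^d - \B(z,r)$ (built from an independent copy $\mathcal P_{j}$ that has not been touched by any earlier exploration). Hence, exactly as in the proofs of \thref{lem:tau} and \thref{lem:K}, the number of offspring $X(z)$ that $z$ produces is distributed like $X_1$, and is independent of the history $\mathcal F_n$ of generations $0,1,\ldots,n$ (here $\mathcal F_n$ is the $\sigma$-algebra generated by the locations, branching times, and Brownian paths of all particles up to and including generation $n$). Writing
\begin{equation*}
Z_{n+1} = \sum_{z \in Z_n} X(z),
\end{equation*}
and conditioning on $\mathcal F_n$ gives
\begin{equation*}
\E[Z_{n+1} \mid \mathcal F_n] = \sum_{z \in Z_n} \E[X(z) \mid \mathcal F_n] = \mu \, Z_n.
\end{equation*}
Taking expectations and iterating from $\E Z_0 = 1$ yields $\E Z_n = \mu^n$.

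The main obstacle here is really just a bookkeeping point rather than a deep one: one must be careful that the fresh Poisson processes $\mathcal P_j$ used in the refillings at different particles are independent of the history of the exploration up to the branching event of that particular particle. This is built into the construction --- each branching event consumes a new, previously unused $\mathcal P_j$ --- so $X(z)$ is genuinely independent of $\mathcal F_n$ and distributed as $X_1$. Given this, the conditional expectation identity and induction are routine. The subtler input is really the continuum-percolation fact $\E |\Gamma| < \infty$ for $r < r_d$, which I would just invoke from \cite{meester}.
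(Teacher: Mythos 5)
Your overall route is the paper's: $\mu<\infty$ comes from the exponential tail of the subcritical continuum cluster (the paper cites \cite[Lemma 10.2]{penrose}; citing \cite{meester} for the same fact is fine), and $\E Z_n=\mu^n$ comes from a Galton--Watson-type expectation recursion justified by the refilling construction. The one place where your write-up goes beyond what the construction delivers is the claim that $X(z)$ is \emph{independent of the full history} $\mathcal F_n$ of all particles through generation $n$. That is not true here. Refilling is done lineage by lineage, and the particles are explicitly blind to the discoveries of their collateral relatives: the environment $\mathcal P''$ seen by $z$ at birth still contains the never-refilled portion of the original $\mathcal P$, and an uncle or cousin of $z$ may already have revealed information about exactly that region (that its Brownian sausage was empty of points, or that a particular cluster sits in a particular place --- the paper notes that the same cluster can trigger branching events for several different particles, which ``introduces considerable dependence''). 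So conditioning on $\mathcal F_n$ genuinely changes the law of $z$'s environment, and $\E[X(z)\mid\mathcal F_n]=\mu$ does not follow from independence of $\mathcal F_n$.

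The fix is small and is precisely the remark the paper appends to its proof: the computation does not require independence across families. What \thref{lem:tau} and \thref{lem:K} actually give is that, conditionally on the history $\mathcal G_z$ of $z$'s \emph{own ancestral line} (which is where all the refilling relevant to $z$ takes place), the environment of $z$ is a fresh unit intensity Poisson process on $\mathbb R^d-\B(z,r)$, so $\E[X(z)\mid\mathcal G_z]=\mu$. Since the event that a given lineage is alive in generation $n$, and the birthplace of its generation-$n$ member, are $\mathcal G_z$-measurable, a Wald-type computation
\begin{equation*}
\E Z_{n+1}=\sum_{z}\E\bigl[\ind{z\in Z_n}\,\E[X(z)\mid\mathcal G_z]\bigr]=\mu\,\E Z_n
\end{equation*}
(sum over potential lineages) yields $\E Z_n=\mu^n$ with no cross-family independence needed. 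So replace ``independent of $\mathcal F_n$'' by ``identically distributed conditionally on its own lineage'' and your argument matches the paper's.
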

\begin{proof}
By \cite[Lemma 10.2]{penrose} for $r < r_d$ we have 
$\mu < \infty$. Now, \thref{lem:K} ensures that particles produce offspring in an identically distributed way. It follows that $\E[Z_{n+1}\mid Z_n] = \mu Z_n$, hence $\E Z_n = \mu^n$. Note that this computation does not require that the different families are independent.  
\end{proof}

Define the generation number of a particle in $\zeta_t$ to be the number of ancestors the particle has. Let $N_t$ be the largest generation number in $\zeta_t$.
\begin{lemma}\thlabel{lem:maxgen}
There exist $\epsilon_0,c>0$ so the $\P[ N_t > t/\epsilon_0] \leq e^{-ct}$ for sufficiently large $t$.
\end{lemma}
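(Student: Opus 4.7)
The plan is to control $N_t$ by a first-moment bound on the number of generation-$n$ particles born by time $t$, combined with an exponential Markov estimate; the crucial input is that $\tau > 0$ almost surely, which lets the exponential moment of $\tau$ be made arbitrarily small.

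Concretely, write $B(v)$ for the birth time of particle $v$ in the branching process $\zeta$ and set $\hat Y_n(t) := \#\{v : \mathrm{gen}(v) = n,\; B(v) \leq t\}$. Since $\{N_t \geq n\} = \{\hat Y_n(t) \geq 1\}$, Markov gives $\P[N_t \geq n] \leq \E[\hat Y_n(t)]$. A one-step induction on the branching recursion --- condition on the root pair $(X_0, \tau_0)$ and use that the $X_0$ offspring subtrees are i.i.d.\ copies of the whole process and are independent of $(X_0, \tau_0)$, as follows from the refill construction together with \thref{lem:tau} and \thref{lem:K} --- yields the moment identity
\begin{equation*}
\E\Bigl[\sum_{v:\,\mathrm{gen}(v) = n} e^{-\lambda B(v)}\Bigr] = \bigl(\E[X e^{-\lambda \tau}]\bigr)^n
\end{equation*}
for every $\lambda > 0$. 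Combining with the pointwise bound $\ind{B(v) \leq t} \leq e^{\lambda(t - B(v))}$ gives the Chernoff-type estimate
\begin{equation*}
\E[\hat Y_n(t)] \leq e^{\lambda t}\bigl(\E[X e^{-\lambda \tau}]\bigr)^n.
\end{equation*}

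To close, I pick $\lambda$ large. Because every active particle is born inside a ball of radius $r$ containing no Poisson points, its Brownian motion cannot reach a sleeping frog in zero time, so $\tau > 0$ almost surely. Hence $Xe^{-\lambda \tau} \to 0$ a.s.\ as $\lambda \to \infty$, and since $X \in L^1$ by \thref{lem:avefro}, dominated convergence gives $\E[Xe^{-\lambda \tau}] \to 0$. Fix $\lambda_0$ so large that $\E[Xe^{-\lambda_0 \tau}] \leq e^{-3}$, set $\epsilon_0 := 1/\lambda_0$, and take $n = \lceil t/\epsilon_0 \rceil$. Then $n \geq \lambda_0 t$ and
\begin{equation*}
\P[N_t \geq n] \leq e^{\lambda_0 t - 3 n} \leq e^{-2 \lambda_0 t}
\end{equation*}
for all sufficiently large $t$, which is the desired bound (with $c = 2\lambda_0$).

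The only step that needs any care is the moment identity displayed above; it is a routine branching-process induction once one observes that subtrees rooted at children of a node are i.i.d.\ copies of the original process, independent of their parent's branching pair --- precisely what the refill construction is designed to guarantee. Beyond that, the argument reduces to dominated convergence and Chernoff: no sharp large-deviation machinery is required, and the positivity $\tau > 0$ that powers everything is a cost-free consequence of the $r$-hole around every newly active particle.
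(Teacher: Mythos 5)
Your argument is correct, but it reaches the bound by a genuinely different route than the paper. The paper works with the raw generation sizes: it applies Markov's inequality to $\E Z_n = \mu^n$ (from \thref{lem:avefro}) to get $\P[Z_n \geq (3\mu)^n] \leq 3^{-n}$, then union-bounds over the at most $(3\mu)^n$ lineages and estimates $\P[\tau_1 + \cdots + \tau_n \leq \epsilon n]$ combinatorially --- at least $n/2$ of the $\tau_i$ must be $\leq 2\epsilon$, costing a factor $2^n \P[\tau_1 \leq 2\epsilon]^{n/2}$ --- before choosing $\epsilon_0$ so that the product of all these exponential factors is summable. You instead run a many-to-one/Chernoff argument on the discounted generation count $\sum_{v:\,\mathrm{gen}(v)=n} e^{-\lambda B(v)}$, whose expectation factorizes as $(\E[Xe^{-\lambda\tau}])^n$, and you kill the branching factor by sending $\lambda\to\infty$, where dominated convergence (dominating function $X\in L^1$ by \thref{lem:avefro}, together with $\tau>0$ a.s.) forces $\E[Xe^{-\lambda\tau}]\to 0$. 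Both proofs rest on the same two inputs --- integrability of the offspring count and $\P[\tau=0]=0$ --- but yours packages the union bound, the generation-size control, and the small-ball estimate for $S_n$ into a single exponential moment, which is cleaner and avoids the $2^n$ subset-counting step.

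One small correction: the offspring subtrees in this construction are \emph{not} i.i.d.\ copies of the process. Siblings share the refilled environment outside their birth cluster and are blind to one another's discoveries, and the paper explicitly warns that this ``introduces considerable dependence'' across families. What the refill construction actually delivers (\thref{lem:tau}, \thref{lem:K}) is that, conditionally on everything up to a particle's birth, its own pair $(X,\tau)$ has the fixed law of $(X,\tau)$. That conditional one-step identity is all your induction really uses --- exactly as the paper's proof of \thref{lem:avefro} notes that $\E Z_n=\mu^n$ does not require independence of the different families --- so your moment identity survives; just do not claim more independence than the construction provides.
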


\begin{proof}
Consider one lineage and let $\tau_i$ be the time between the birth of two particles in the same lineage. 
By \thref{lem:tau}, the $\tau_i$ are i.i.d. Moreover, we have $\P[\tau_1 =0] = 0$ since each particle starts in an ball of radius $r$ that contains no other points in the process. 

%
%
Let $S_n=\tau_1+\cdots + \tau_n$. If $S_n<\epsilon n$, then there must be at least $n/2$ of these $\tau_i$ satisfying $\tau_i\le 2\epsilon$. 
Notice that Markov's inequality applied to \thref{lem:avefro} guarantees  $\P[Z_n \geq (3 \mu)^n] \leq 3^{-n}$. For an arbitrary $\epsilon>0$ we condition on this event to obtain
\begin{align}
\P[N_{\epsilon n} \geq n]  &\leq 3^{-n} + \P[N_{\epsilon n} \geq n \mid Z_n < (3 \mu)^n].\label{eq:bayes}
\end{align}
We can apply a union bound to the second summand to obtain
\begin{align}
\P[N_{\epsilon n} \geq n \mid Z_n < (3 \mu)^n]&\leq  (3 \mu)^n \P[S_n \leq \epsilon n] \nonumber \\
			& \le (3\mu)^n 2^n \P[\tau_1\le 2\epsilon]^{n/2},\nonumber
\end{align}
where $2^n$ is an upper bound on the number of subsets of $\{1, \ldots n \}$ of size at least $n/2$.
As $\P[\tau_1 = 0] = 0$ we can choose $\epsilon_0$ such that
\begin{equation*}
6\mu \P[\tau_1\le 2\epsilon_0]^{1/2} < 1/3.
\end{equation*}
It follows from \eqref{eq:bayes} that $\P[N_{\epsilon_0 n} \geq n] \leq  3^{-n} + 3^{-n} \leq 2^{-n}$ for large enough $n$.  
Let $t = \epsilon_0n  $ so that
$$\P[N_{t} > t/\epsilon_0] = \P[N_{\epsilon_0 n} \geq  n ]  \leq 2^{-n} = 2^{- t/\epsilon_0 } \leq e^{- c t}$$
for some $c>0$. 
\end{proof}
Next, we need a result of Penrose that the right tail of the number of balls in a continuum cluster decays exponentially.

\begin{lemma} \thlabel{lem:X}
Let $X$ be the number of offspring created at each step in the branching process as in \thref{lem:K}. It holds that $\P[X > k] \leq e^{-ck}$ for some $c>0$ and large enough $k$.
\end{lemma}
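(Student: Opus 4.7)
The plan is to reduce \thref{lem:X} to a standard large-deviation estimate for the size of a subcritical continuum percolation cluster. By \thref{lem:K} we have $X \overset{d}{=} 2 + K$, where $K+1$ is the number of centers in the cluster $\Gamma(0,\tilde{\mathcal P}\cup\{0\})$. Hence $\P[X>k] = \P[K > k-2]$, and it is enough to show that $\P[K > j] \le Ce^{-cj}$ for some $c,C>0$ and all $j\ge 1$; the shift by $2$ and the prefactor $C$ can then be absorbed into a slightly smaller exponent for large $k$.

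For the tail bound on $K$, the plan is to cite directly the sharpness of the subcritical phase of the Boolean continuum percolation model on $\R^d$ with fixed radii, as proved in Penrose's monograph \cite{penrose}: for $r<r_d$ there exist $c,C>0$ such that $\P[|\Gamma|\ge k]\le Ce^{-ck}$. This is the same framework that already supplied $\E K<\infty$ in \thref{lem:avefro} via \cite[Lemma 10.2]{penrose}, so the reference slots in naturally and the proof collapses to a two-line reduction.

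If a self-contained argument were desired, the simplest route would be a Galton--Watson domination: the points of $\mathcal P$ within distance $r$ of a fixed point form a Poisson process with mean $\lambda_r := |\B(0,2r)|$, so $K$ is dominated by the total progeny of a Poisson$(\lambda_r)$ branching process, which has exponential tails whenever $\lambda_r<1$. The main obstacle — and the reason I would not follow this route — is that $\lambda_r=1$ happens at some radius $r_\ast$ strictly smaller than $r_d$, so the naive domination fails for $r\in(r_\ast,r_d)$. Extending exponential decay all the way to $r_d$ is precisely the sharpness of the phase transition, which is non-trivial (it requires a Menshikov/Aizenman--Barsky-type differential inequality adapted to the Boolean model). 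Since this is exactly what Penrose's theorem provides, I would use it as a black box and let \thref{lem:X} follow in a single step.
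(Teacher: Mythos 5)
Your proposal is correct and follows essentially the same route as the paper: reduce $X$ to two plus the size of a continuum percolation cluster via \thref{lem:K}, then invoke the exponential tail bound for subcritical cluster sizes from \cite[Lemma 10.2]{penrose}. Your additional remarks on why a naive Galton--Watson domination would not reach all $r<r_d$ are a sensible aside but not needed, since the paper likewise treats Penrose's result as a black box.
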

\begin{proof}
\thref{lem:K} ensures that the number of offspring is dominated by two plus the number of particles in a continuum cluster. The exponential bound then follows from the exponential bound on the cluster size in \cite[Lemma 10.2]{penrose}.
\end{proof}

Now we are in a position to prove that the frog model is contained in a linearly expanding ball.

\begin{proof}[Proof of \thref{prop:upper}]

Let $m_t = \max_{ x \in \xi_t} \|x\|$ be the furthest activated site from the origin in the frog model, and $M_t = \max_{z \in \zeta_t} \|z\|$ be the furthest birth-site of a particle in the branching process.  The coupling in \thref{lem:coupling} ensures that $m_t \preceq M_t$. So, it suffices to find a constant $R_1>0$ such that $M_t \leq R_1 t$ almost surely for all sufficiently large $t$.

Let $z \in \zeta_t$ be an arbitrary birth place of an active particle. We need to introduce a few random quantities attached to $z$ that will give us an upper bound on $\|z\|$. First we focus on the Brownian path leading to $z$. 

Suppose that the particle at $z$ was born at time $\tau \leq t$ and let $(B_s)_{s \geq 0}$ be the Brownian path followed by the ancestors of $z$, and then by the particle at $z$ after time $\tau$. Define $B^*_t = \sup_{s \leq t} \|B_s\|.$  Set $N_t$ to be the maximal generation of particles in $\zeta_t$. 
 
Now we turn our attention to the displacement of children during the branching times of ancestors of the particle started from $z$. Let $J \overset{d}= 2 r X$ with $X$, the number of particles created at a branching time, distributed as in \thref{lem:X}. Take $J_1, J_2,\hdots$ to be i.i.d.\ copies of $J$ and $(B_s)_{s \geq 0}$ to be a Brownian motion independent of the $J_i$'s.

We claim that
\begin{align}\|z\| \preceq B^*_t + \sum_{i=1}^{N_t} J_i. \label{eq:coupling_bound}
\end{align}
Note that there is dependence between $B^*_t$ and $N_t$. However, the $J_i$ are independent of both $B_t^*$ and $N_t$. 

To justify \eqref{eq:coupling_bound} we note that the contribution from the Brownian path leading to $z$ is at most $B_t^*$, and the number of jumps that have occurred is at most $N_t$. Any given offspring will displace by at most the size of a continuum cluster. This is bounded in distribution by $J=2r X$, the number of particles in the cluster times the diameter of each ball. The independence assertions regarding the $J_i$ and $B_t^*$ and $N_t$ follow from \thref{lem:K}.

Since $z \in \zeta_t$ is arbitrary it follows from \eqref{eq:coupling_bound} that 
\begin{align}M_t \preceq B^*_t + \sum_{i=1}^{N_t} J_i. \label{eq:coupling_bound2}
\end{align}
We turn our attention to providing exponential bounds on the two right-hand terms. The deviations of the maximum of Brownian motion can be bounded in a way which ensures that for any $c>0$ there exists $C>0$ such that for all sufficiently large $t$ we have,
\begin{align}
\P[ B^*_t >Ct ] \le e^{-ct} \label{eq:B*}.
\end{align}
(See eg \cite[Lemma 5.2.1]{LargeDevTA})

We can obtain a similar exponential bound on $\sum_{i=1}^{N_t} J_i$. Recall that \thref{lem:maxgen} gives $\epsilon_0$ and $c'$ so that $\P[N_t > t/\epsilon_0] \leq e^{- c't}$ for large $t$. Let $T = \lceil t / \epsilon_0 \rceil$ and $C'>0$. Using the exponential bound on the right tail of $X$ in \thref{lem:X} we have
\begin{align}
	\P\Bigl[ \sum_{i=1}^{N_t} J_i > C' t, N_t \leq T \Bigr] \leq \P\Bigl[ \sum_{i=1}^{T} J_i > C' t\Bigr]. \nonumber
\end{align}
The sum $J_1 + \cdots + J_T$ is a finite sum of i.i.d.\ random variables with exponential tails. Using the exponential Markov inequality and the exponential tails of $J_i$, one can show that for any $c>0$ there exists $C'>0$ so that 
\begin{align}\P[ J_1 + \cdots + J_T > C'T] \leq e^{- cT}.\label{eq:sum_bound}\end{align}
It follows from the relation at \eqref{eq:coupling_bound2} and the bounds in \eqref{eq:B*}, \eqref{eq:sum_bound}, and \thref{lem:maxgen} that for sufficiently large $R_1>0$ we have constants $c,c' >0$ so that
\begin{align*}\P[ M_t \geq R_1 t] &\leq \P[ B_t^* > (R_1 /2) t] + \P\Bigl[\sum_{i=1}^{N_t} J_i > (R_1/2) t\Bigr] \\
			&\leq e^{-c t} + e^{- c' t}.	
\end{align*}
This ensures that almost surely for all sufficiently large $t$ we have $M_t \leq R_1 t$ completing the proof. 
\end{proof}

\section{Passage time and containing a ball} \label{sec:linear}

First, we will show that the frog model contains a growing small cube. To do this, we will divide the $[-n,n]^d$ cube into smaller cubes using a square mesh. Then we stagger the frog's wake-up times in such a way that there is an active frog at each square in the mesh. By shrinking the size of the mesh, we are able to show that the active frogs spread out and that they will visit all frogs in $[-n,n]^d$ by time $(D+1)n^{2+\epsilon}$ for a constant $D$.
We say that events $A_n$ happen \textit{with extremely high probability (wehp)} if 
$$\P[A_n] \ge 1 - \exp(-n^{\gamma}) \hspace{.2 in} \text{for some } \gamma.$$ 
Alternatively, we say that $A_n$ happens \emph{with very high probability (wvhp)}  if $$\P[A_n] \ge 1 - n^{-m}$$ for any $m<\infty$ and sufficiently large $n$. 

We begin by stating some basic facts of Brownian motion. 
\begin{lemma}
Let $B_t$ be a one dimensional Brownian motion. If $\ell\ge 1$ is an integer 
\begin{align}
\P\left[ \max_{0 \le t \le \ell k^2} |B_t| \le k \right] \le (0.7)^\ell
\label{Bub} \\
\P\left[ \max_{0 \le t \le k^2} B_t \ge \ell k \right] \le 2 e^{-\ell^2/2}
\label{Blb}
\end{align}
\end{lemma}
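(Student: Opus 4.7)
The plan is to prove the two bounds independently, since they are both standard estimates for Brownian motion. For the first, I would iterate the Markov property over $\ell$ time-blocks of length $k^2$ to reduce the problem to an $\ell$-th power of a one-block survival probability, then verify that this single-block probability is at most $0.7$. For the second, the reflection principle reduces the problem to a Gaussian tail estimate. The only delicate point is the explicit numerical constant $0.7$: I would handle this by replacing the max-based single-block probability by the weaker endpoint probability $\P_x[|B_{k^2}|\le k]$, whose supremum over $x\in[-k,k]$ has a closed form.

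For the first bound, set $p_1 = \sup_{|x| \le k} \P_x\bigl[\max_{0 \le t \le k^2}|B_t| \le k\bigr]$. Partitioning $[0,\ell k^2]$ into the blocks $[(j-1)k^2, jk^2]$ for $j=1,\ldots,\ell$ and applying the Markov property at the block boundaries, the conditional probability of surviving block $j$ given survival through block $j-1$ is at most $p_1$, since survival through block $j-1$ forces $B_{(j-1)k^2} \in [-k,k]$. Iterating yields $\P\bigl[\max_{0 \le t \le \ell k^2}|B_t| \le k\bigr] \le p_1^\ell$. To bound $p_1$, I would use the trivial inclusion $\{\max_{0 \le t \le k^2}|B_t| \le k\} \subseteq \{|B_{k^2}| \le k\}$ to obtain
\[
p_1 \;\le\; \sup_{|x| \le k} \P[|x + kZ| \le k] \;=\; \sup_{|y|\le 1}\bigl[\Phi(1-y) + \Phi(1+y) - 1\bigr],
\]
where $Z \sim N(0,1)$ and $y = x/k$. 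A one-line derivative check (the derivative in $y$ equals $\phi(1+y) - \phi(1-y)$, which vanishes only at $y=0$ inside $[-1,1]$ and is negative for $y>0$) shows the maximum is at $y=0$, giving $p_1 \le 2\Phi(1) - 1 \approx 0.683 < 0.7$.

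For the second bound, the reflection principle yields
\[
\P\bigl[\max_{0 \le t \le k^2} B_t \ge \ell k\bigr] \;=\; 2\,\P[B_{k^2} \ge \ell k] \;=\; 2\,\P[Z \ge \ell],
\]
via the scaling $B_{k^2} \stackrel{d}{=} kZ$ with $Z \sim N(0,1)$, and $\P[Z \ge \ell] \le e^{-\ell^2/2}$ by the standard Chernoff bound applied to $e^{\lambda Z}$ optimized at $\lambda=\ell$. This completes the sketch; no step beyond the numerical check for $p_1$ is substantive.
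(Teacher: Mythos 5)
Your proof is correct and follows essentially the same route as the paper: iterate the Markov property over blocks of length $k^2$, bound the one-block survival probability by the endpoint probability $\sup_{x\in[-k,k]}\P_x[B_{k^2}\in[-k,k]]\le 0.7$, and use the reflection principle plus a Gaussian tail bound for the second inequality. The only difference is that you actually verify the numerical constant via $2\Phi(1)-1\approx 0.683<0.7$, which the paper asserts without computation.
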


\begin{proof} For the first we note that 
$$
\sup_{x \in [-k,k]} \P_x \Bigl[B_{k^2} \in [-k,k]\Bigr] \le 0.7.
$$
Here $\P_x$ is the probability when the particle starts at $x$.
For the second, the reflection principle and a standard normal tail bound gives
$$
\P\left[ \max_{0 \le t \le k^2} B_t \ge \ell k \right] \le 2\P[ B_{k^2} \ge \ell k]  \le 2 e^{-\ell^2/2}
$$
which proves the desired result.
\end{proof}  

Now we show that given $O(n^{2 +\epsilon})$ time to evolve, the Brownian frog model is very likely to wake all of the frogs in a cube of width $n$. We use the notation $f(n) = \Omega(g(n))$ to mean that there exists $C$ such that for large enough $n$ it holds $f(n) \geq C g(n)$.

\begin{lemma} \thlabel{lem:step1} Let $T_1=n^{2+\ep}$. There is a constant $D$ so that with wvhp all frogs in $[-n,n]^d$ are awake at time $(D+1)T_1$. 
\end{lemma}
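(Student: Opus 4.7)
The plan is to iteratively refine a mesh covering $[-n,n]^d$. Fix a small $\beta > 0$. At stage $k$, partition the cube into sub-cubes of side $s_k = n^{(1-\beta)^k}$, starting from $s_0 = n$. The inductive claim is that wvhp, by time $k T_1$, every sub-cube at stage $k$ contains at least one awake frog. Choose $D$ so that $s_D \leq n^a$ for a small fixed $a > 0$, then show that one additional $T_1$ time suffices to wake every remaining sleeping frog in $[-n,n]^d$, yielding the lemma with total time $(D+1)T_1$.

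The inductive step from stage $k$ to $k+1$ is proved identically to the base case $k = 0 \to k = 1$, after rescaling: starting from a single awake frog in a stage-$k$ cube $Q$ at time $k T_1$, I would show that the frog model reaches every sub-cube $C$ of $Q$ of side $s_{k+1}$ within an additional $T_1$ time. Since $T_1 \gg s_k^{2+\epsilon}$, there is ample slack. Partition $[0, T_1]$ into $T_1/s_{k+1}^2$ intervals of length $s_{k+1}^2$. By \eqref{Bub}, in each interval the active frog's Brownian motion has probability at least $0.3$ of moving by distance $s_{k+1}$; by \eqref{Blb}, it remains in a region of diameter $O(\sqrt{T_1})$ wvhp. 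Combined with the $\Omega(s_{k+1}^d)$ Poisson points present in each sub-cube, this gives a super-polynomially small failure probability for visiting and activating a sleeping frog in $C$. A union bound over the $O(n^{d\beta})$ sub-cubes within each $Q$, and then over all stage-$k$ cubes, preserves wvhp.

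For the final stage, once every $n^a$-cube contains an awake frog, each remaining sleeping frog $f$ lies within distance $\sqrt{d}\, n^a$ of some awake frog $g$, and the Brownian motion of $g$ over additional time $T_1 = n^{2+\epsilon} \gg n^{2a}$ visits an $r$-ball around $f$ with super-polynomially high probability by the same kind of argument at the smallest scale. A union bound over the $O(n^d)$ sleeping frogs in $[-n,n]^d$ closes the argument.

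The main obstacle is the coverage argument in the inductive (and base) step: in $d \geq 3$, a single Brownian path does not visit every sub-cube of a partition with high probability. One must exploit the branching nature of the frog model — after the original active frog wakes a few more, their independent Brownian paths collectively cover the region. Formally, I would control the growing cloud of active frogs by noting that after a short burn-in the number of awoken frogs is large enough that the union of their Brownian sausages densely fills every sub-cube, and use a Markov-type restart argument at each intermediate awakening to exploit independence of the resulting Brownian motions. A careful accounting with \eqref{Bub}--\eqref{Blb} and the Poisson density should push the failure probability below any polynomial, as needed for wvhp to survive the $O(n^d)$ union bound.
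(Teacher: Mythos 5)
Your overall scaffold (iterated mesh refinement down to scale $n^{a}$, then a final sweep) matches the paper's, but the two places where the real work happens are either deferred or wrong. First, the coverage step: you correctly identify that a single Brownian path cannot visit every sub-cube when $d\ge 3$, but the fix you sketch --- ``after a short burn-in the number of awoken frogs is large enough that the union of their Brownian sausages densely fills every sub-cube'' --- is precisely the claim that needs proof, and it is not true as stated: the sausages do not fill the sub-cubes; rather, each of the many awake frogs has only a \emph{polynomially small} probability of being in any given sub-cube at time $T_1$. The paper's argument is quantitative: run the initial frog for time $n^{2-\ep}$, use the Wiener sausage volume lower bound of \cite{sausage1} to wake $\ge n^{2-3\ep}$ frogs, then use the heat kernel to show each lands in a fixed sub-cube of side $n^{1-\beta}$ with probability $\Omega(n^{(1-\beta)d-(2-\ep)d/2})$, and check that the exponent $(2-3\ep)+(1-\beta)d-(2-\ep)d/2$ is positive for the specific choices $\ep=1/(3+d)$, $\beta=1/d$. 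Without this bookkeeping (or some substitute) the induction has no content; your appeal to \eqref{Bub} only controls displacement, not hitting a prescribed target cube. You also omit the thinning of $\mathcal P$ into $D$ independent classes, which the paper uses to keep the sleeping frogs at each stage independent of the paths revealed in earlier stages; without it the ``fresh Poisson points in each sub-cube'' assertion in your inductive step is not justified.

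Second, your final stage contains an outright error for $d\ge 3$: a single Brownian motion started at distance $n^{a}$ from an $r$-ball is transient and hits that ball with probability only of order $n^{-(d-2)a}$, not ``with super-polynomially high probability.'' The paper instead counts the \emph{many} awake frogs within distance $n^{b}$ of the target, multiplies the polynomially many independent trials against the polynomially small per-frog hitting probability $\asymp n^{(2-d)b}$, and checks that the resulting exponent $(2-d)b+(d-1)b=b$ is positive before taking the union bound over tiny cubes. Your union bound over $O(n^{d})$ sleeping frogs would then go through, but only after this repair.
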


\begin{proof}
Suppose $d \ge 3$. In this case all events will be wehp. Our proof will be iterative, with $D$ stages. To prepare for this, we will label our Poisson points with independent random variables uniform on $\{1, \ldots D\}$. By Poisson thinning, the frogs labeled $i$ are a Poisson process with intensity $1/D$, call the process $\mathcal{P}_i$. 

Let $\ep = 1/(3+d)$ and $\beta = 1/d$. Choose $D$ so that $(1-\beta)^{D}$ is smaller than $\frac{2+\epsilon}{2d}$. Consider $Q_n = [-n,n]^d$ with an active frog at the origin and sleeping 1-frogs at $\mathcal{P}_1$.
Run the process for $t_1 := n^{2-\ep}$. It follows from the discussion in \cite[(1.4)]{sausage1} that wehp the Brownian sausage with radius $r$ created by the active frog from the origin has volume $V \ge n^{2-2\ep}$ at time $n^{2-\ep}$. In addition, wehp the sausage is a subset of $[-n,n]^d$ because the frog is unlikely to have walked outside the box in time $n^{2-\epsilon}$. 
Since the number of points of $\mathcal{P}_1$ in a set of volume $V$ is Poisson with mean $V/D$ a standard large deviations estimate for Poisson random variables implies that wehp the number of awakened frogs is at least $ n^{2-3\ep}$. Call these the active 1-frogs.

Form a minimal covering of $Q_n$ by finitely many disjoint small cubes, $Q^1_{n^{1-\beta}},Q^2_{n^{1-\beta}},\hdots$, of side length $n_1 := n^{1-\beta}$ and call these the generation 1 (G1) cubes. Call the center of a G1 cube the cube centered at the same point with side length $n_1/2$. Continue the process until $T_1=n^{2+\ep}$. We want to bound the probability that an active 1-frog is in the center of a given G1 cube at time $T_1$. A standard hitting estimate for Brownian motion in an annulus \cite[Section 8.5.1]{durrett}  lets us write this probability as 
$$
\int_{Q^k_{n^{1-\beta}/2}} \frac{1}{(2\pi s)^{d/2}} \e^{-(x-y)^2/2s} \, dy
$$
where $s$ is the time since the 1-frog was activated and $x$ is the initial location of that 1-frog.

 Because all active 1-frogs were activated before time $t_1$, $s \in [T_1-t_1, T_1]$. Since the diameter of $Q_n$ is $2 \sqrt d n$, and all of the active 1-frogs are within $Q_n$ wehp, we have $|x - y| \leq 2 \sqrt{d} n $. So, 
\begin{align*}
\int_{Q^k_{n^{1-\beta}/2}} \frac{1}{(2\pi s)^{d/2}} \e^{-(x-y)^2/2s} \, dy &\geq (2\pi T_1)^{-d/2}|Q_{n^{1-\beta}/2}^k|  \e^{-2dn^2/(T_1-t_1)}\\
&= (2\pi n^{2+ \epsilon})^{-d/2} n^{(1-\beta)d}2^{-d} \e^{-2dn^2/(n^{2+\epsilon}-n^{2-\epsilon})} \\
&= \Omega\left(\frac{ n^{(1-\beta)d}}  { n^{(2-\ep)d/2}}\right).
\end{align*}
Thus for $n$ large, the probability that there are no 1-frogs in a given generation 1 cube at time $T_1$ is no more than
\begin{align}
\left( 1 - C \frac{ n^{(1-\beta)d}}  { n^{(2-\ep)d/2} } \right)^{n^{2-3\ep}}
& \approx \exp\left( - C n^{ (2-3\ep) + (1-\beta) d - (2-\ep) d/2 }\right) \nonumber \\
& = \exp\left( - C n^{ 1-3\ep -\ep d/2 }\right) = \exp( - Cn^\delta) \label{eq:conclusion}
\end{align}
with $\delta>0$ since $\ep = 1/(3+d)$.
Since the number of G1 cubes is less than or equal to $(2n^\beta)^d$, it follows that wehp
there is a 1-frog in the center of every G1 cube at time $T_1$.

At $T_1$, put down sleeping 2-frogs at the points in $\mathcal{P}_2$. Recall that $n_1 = n^{1-\beta}$, let $n_2 = n_1^{1-\beta}$, $t_2 = T_1 + n_1^{2-\epsilon}$, and $T_2 = T_1 + n_1^{2 + \epsilon}$. We know that wehp, there is an active 1-frog in a given G1 cube. Focusing on a specific G1 cube, we let the active 1-frog run and wake up the sleeping 2-frogs until $t_2$. 

Similar to the first step, between time $T_1$ and time $t_2$, the active 1-frog in this G1 cube will produce wehp a Brownian sausage of volume at least $n_1^{2-2\epsilon}$, waking up, wehp, at least $n_1^{2-3\epsilon}$ of the 2-frogs. Call these newly awakened frogs the active 2-frogs. Again, we allow these active 2-frogs to run from until time $T_2$. Defining generation 2 cubes as disjoint cubes of side length $n_1^{1-\beta}$ inside the G1 cube, we can do a very similar calculation to find the probability that at time $T_2$ there is an active 2-frog in the center of each G2 cube: 
$$
\int_{Q^k_{n_1^{1-\beta}/2}} \frac{1}{(2\pi s)^{d/2}} \e^{(x-y)^2/2s_2} \, dy
$$
where $s_2 \in [T_2 - t_2, T_2 - T_1]$ is the time since the 2-frog was activated and $x$ is the initial location of that 2-frog. Using the same restrictions on the maximum distance between $x$ and $y$. Note a concern might be that the initial 1-frog in each G1 cube starts near the boundary and strays out of the cube in time less than $t_2$. We can address this by requiring the more strict event that each 1-frog be inside a cube centered in $Q_{n^{1-\beta}}^k$ but with side length $1/2$ that of $Q_{n^{1-\beta}}^k$. This only changes the volume by a constant factor, and thus does not change the conclusion at \eqref{eq:conclusion}. We get that this new probability must be at least 
$$(2\pi (T_2-T_1))^{-d/2}n_1^{(1-\beta)d2^{-d}}\e^{Cn_1^2/(n_1^{2+\epsilon}-n_1^{2-\epsilon})}.$$

The above quantity is asymptotically larger than
\begin{align*}
2\pi (T_2-T_1))^{-d/2}n_1^{(1-\beta)d} &\approx \exp(-n_1^{(1-\beta)d + 2-3\epsilon - (2+\epsilon)d/2})\\
&= \exp(-n_1^{\delta})\\
&= \exp(-n^{\delta_2})
\end{align*}
for some $\delta_2 >0$. 
Thus, for large $n$ we have
$$\P[\text{there is an active $2$-frog in each $G2$ cube at time $T_2$}] = \Omega(\exp(-n^{\delta_2})).$$
So the probability that there is not an active 2-frog in a given G2 cube is once again exponentially small. Because there are $(2n_1^\beta)^d$ G2 cubes, then wehp there is an active 2-frog in the center of every G2 cube.

It is important to note here that the thinning of the frogs at the beginning was necessary to ensure that we did not reuse frogs in multiple places/steps at the same time and to provide the independent environments for the 1-frogs within the G1 cubes. 

We iterate this process with $n_k = n_{k-1}^{1-\beta}$, $t_k = T_{k-1}+n_{k-1}^{2 - \epsilon}$ and $T_k = T_{k-1} + n_{k-1}^{2+\epsilon}$ for $k$ up to $D$. We get the same results at each cube size iteration. The end result is to say that wehp we have an active $D$-frog in the center of every generation $D$ cube of side length $n^{(1-\beta)^D}$ wehp.  Because we introduced frogs in stages rather than placing them all in the process from the beginning, the frogs in this modified process wake up at the same time or later than they would in the true frog model. Therefore, if this process contains a cube of all awake frogs, we can be certain that the original process contains the same cube of all awake frogs at that time as well.    

Now we have an active frog within $n^{(1-\beta)^D}$ of every point in $[-n,n]$. For notational simplicity, define $b := d(1-\beta)^D$. All that is left is to bound the probability that a frog within $n^b$ of a point $y$ will hit $\B(y,r)$ by $T_1$. To do this, divide $[-n,n]^d$ into tiny cubes of side $r/\sqrt{d}$, so that if $y$ is in a tiny cube, $\B(y,r)$ contains the entire tiny cube. Inside each tiny cube put a tiny ball of radius $q = r/2\sqrt{d}$. Then we look at the probability that a Brownian motion starting at $x$ with $|x| = n^b$ hits $\B(0,r)$ before exiting a larger ball $\B(0,n^{2b})$. Because of our initial choice of $D$, \eqref{Blb} says that wehp the exit time of the ball $\B(0,n^{2b})$ is greater than $T_1$. Letting $S_q = \inf\{t>0\colon |B_t| = q, |B_0| = n^b\}$, then from a standard exit time probability, we get the lower bound 
$$
\P[ S_q < T_1] \ge \frac{1}{2} \frac{n^{(2-d)b} - n^{(2-d)2b}}{q^{2-d} - n^{(2-d)2b}}
$$
where the $\frac{1}{2}$ is added as a conservative estimate that the Brownian motion did not leave $\B(0,n^{2b})$.

So given a fixed tiny cube, the probability no awake frog within distance $n^b$ of it hits the tiny ball within time $T_1$ is no larger than
$$
\left( 1 -  \frac{1}{2} \frac{n^{(2-d)b} - n^{(2-d)2b}}{q^{2-d} - n^{(2-d)2b}} \right)^{n^{(d-1)b}}
\approx \exp\left( -C n^{(2-d)b + (d-1)b} \right).
$$
where the number of frogs is given by finding the number of cubes of side length $n^{(b-\beta)^D}$ within $n^b$ of a point. Since the exponent for $n$ is positive, this implies that if we divide $[-n,n]^d$ into cubes of side $r/\sqrt{d}$ then wehp each
tiny cube is visited by an awake frog by time $T_1 + T_D$, and hence all the frogs in $[-n,n]^d$ are awake by time $(D+1)T_1$.

In $d=2$, from \cite[(1.5)]{sausage1} we see that the probability that the volume of the sausage is smaller than $b t/\log t$ is only $\exp(-\gamma(b) \log t)$, where $\gamma(b)\to\infty$ as $b\to0$. So, instead of the sausage volume being at least $n^{2-2\ep}$ wehp, this only holds wvhp. Since the number of squares used in the construction is only polynomial in $n$, this wvhp result is enough to repeat the previous proof.  
\end{proof}

This result is enough to meet the conditions of Theorem \ref{thm:liggett} and to get the existence of a speed, but the statement of \thref{lem:step1} is not strong enough to say that the process contains a linearly expanding ball.  However, we will now build on this result to get the containment of a linearly expanding ball as desired. The proof goes by covering the ball of radius $n$ with smaller balls along which the process covers linearly. This happens simultaneously in all directions whp, thus covering the entire ball (See Figure \ref{fig:small_ball}).

\begin{proof}[Proof of \thref{prop:lower}]
	

 Let $L = \{t e_1 \colon t \geq 0\}$ with $e_1$ the standard unit vector. For $n \ge 0$ let $x_n$ be closest point in $\mathcal P$ to $ne_1$. 
Let $s=d/2+\alpha/2$ with $\alpha > 0$ small and let
$$
R_n = [-n^{s}, n + n^{s}] \times [-n^{s}, n^s]^{(d-1)}.
$$
By \eqref{Bub} wvhp 

\begin{enumerate}[label = (\roman*)]
 \item no frog from outside $R_n$ wakes up a frog at $x_0, x_1, \ldots x_n$ before time $n$. (to see this, note that at some time before $n$
it would have to be on the boundary of $R_n$), and 
	\item  no frog that starts in $R_n$ moves by $n^r$ before time $n$. 
\end{enumerate}

\begin{figure}
\centering
\begin{tikzpicture}[scale  =.6]
	\draw (0,0) circle (3 cm);
	\draw (0,0) -- (3,0);
	\draw (0,0) circle (.3 cm);
	\draw (.4,0) circle (.3 cm);
	\draw (.8,0) circle (.3 cm);
	\draw (1.2,0) circle (.3 cm);
	\draw (1.6,0) circle (.3 cm);
	\draw (2,0) circle (.3 cm);
	\draw (2.4,0) circle (.3 cm);
	\draw (2.8,0) circle (.3 cm);

	\draw[rotate=10] (0,0) -- (3,0);
	\draw[rotate=10] (0,0) circle (.3 cm);
	\draw[rotate=10] (.4,0) circle (.3 cm);
	\draw[rotate=10] (.8,0) circle (.3 cm);
	\draw[rotate=10] (1.2,0) circle (.3 cm);
	\draw[rotate=10] (1.6,0) circle (.3 cm);
	\draw[rotate=10] (2,0) circle (.3 cm);
	\draw[rotate=10] (2.4,0) circle (.3 cm);
	\draw[rotate=10] (2.8,0) circle (.3 cm);

\end{tikzpicture}  
\caption{We use our result from Lemma \ref{lem:step1} that $[-n,n]^d$ is covered in $n^{2+\epsilon}$ steps on a smaller scale with many overlapping balls to prove that the entire ball is covered.} \label{fig:small_ball}
\end{figure}
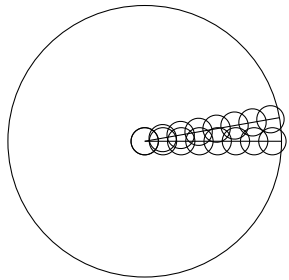

Let $q = n^{1/2 + \alpha}$. When the good event in (ii) occurs, the passage times $T(x_i,x_j)$ and $T(x_k,x_\ell)$ are independent for $k-j \ge q$ because the frogs used in finding the first passage time will not be able to travel far enough to influence the second passage time. This can be made precise by coupling independent random variables.

Let $m = \lfloor n^{\alpha} \rfloor$. Let $Y_i = T(x_{(i-1)m}, x_{im})$ for $i\le n/m$. If $m$ is large then $EY_i \le (\gamma_r + \ep)m$ by subadditivity.
Define the random variable $$\bar Y_i = Y_i \ind{ Y_i \le n^{\alpha(2+\ep)} },$$ and note that by \thref{lem:step1} wvhp $\bar Y_i = Y_i$ for all $1\le i \le n/m$.
Let $Z_i = \bar Y_i - E \bar Y_i$ and observe that $E\bar Y_i \le EY_i\le (\gamma_r + \ep)m$, and $|Z_i| \le n^{ \alpha(2+\ep) }$. 
In the definition of $Z_i$, we are subtracting off the mean of $Y_i$, but because it is bounded by something of order $n^{\alpha}$, the dominant part of $Z_i$ is still of order $n^{\alpha(2+\epsilon)}$. For $k \le n^{1/2}$ consider the sum
$$
S_k = \sum_{j=0}^{n^{1/2}/m} Z_{jn^{1/2} + k }.
$$
In words, $S_k$ represents how far the passage time deviates from the mean for a selection of points along the ray $L$. In $S_k$, we only add up passage times between points that are at least $n^{1/2 + \alpha}$ apart; so, on the good event (ii) for each fixed $k$ the variables being summed are independent, mean 0, and bounded by $n^{\alpha(2+\ep)}$. 
So, if $p$ is a positive integer
$$
\E|S_k|^{2p} = O \Bigl ( (n^{1/2}/m )^p n^{2p \alpha (2+\ep)} \Bigr).
$$
To see this, note that the $p$th power of the sum has terms of the form $Z_{i_1}^{p_1} \ldots Z_{i_k}^{p_k}$ with $p_1 + \cdots p_k = 2p$. If any $p_i=1$, then the expected value of the product vanishes. Thus, the largest contribution to the polynomial comes when all $p_i=2$. This is because $\binom{n^{1/2}/m}{p} \approx (n^{1/2}/m)^p$ is the asymptotically largest binomial coefficient. Using our bound $Z_i \leq n^{\alpha( 2 + \epsilon)}$, the dominant terms (all $p_i =2$) sum to be no larger than
$$(n^{1/2}/m)^p n^{2p \alpha( 2 + \epsilon)},$$
as claimed. The $O(\cdot)$ accounts for the smaller order terms.
Now, it follows from Markov's inequality that 
$$
\P[ |S_k| > \ep n^{1/2} ] \le \ep^{-2p} n^{-\alpha p} n^{p[2\alpha(2+\ep) - 1/2]} = \ep^{-2p} n^{-\beta p}
$$   
where $\beta = \alpha + 1/2 - 2\alpha(2+\ep)$. If $p$ is large enough then
$$
\P \left[ \sum_{k=1}^{n^{1/2} }|S_k| > \ep n \right] \le n^{-\beta p/2}.
$$
Tracing back through the definitions 
$$
\P[ T(x_0,x_n) \le n(\gamma_r + 2\ep) ] \ge 1 - n^{-\beta p/2}.
$$
If we let $N = \ell n$, then 
$$
\P [T(x_0,x_{kn}) \le kn(\gamma_r + 2\ep) \hbox{ for all $1\le k \le \ell$} ] \ge 1 - \ell n^{-\beta p/2}.
$$

The bound is the worst when $k=1$ and it is used $\ell$ times.
Given a starting point we can wvhp cover a ball of radius $(\ep N)^{1/(2+\ep)}$ wvhp in time $\ep N$. We can rewrite
$2n = (\ep\ell n)^{1/(2+\ep)}$ when $\ell = 2^{2+\ep} n^{1+\ep}/\ep$. If $p$ is large enough and we add enough 
lines then all points in the ball of radius $N$ are within $(\ep N)^{1/(2+\ep)}$ of the points on our lines 
so with high probably the ball is covered at time $N(\gamma_r + 3\ep)$ and the proof is complete. Note that to choose the lines put the ball in a cube and then pick the lines so that they are sufficiently close on the boundary of the
cube. They will be even closer inside the ball.

\end{proof}

\section{Local convergence} \label{sec:ppp}


%

\begin{proof}[Proof of Theorem \ref{thm:ppp}]
For simplicity we prove the result when $d = 2$. A similar argument with slightly harder calculus proves the result in higher dimensions.  To begin we consider a fixed square $Q = [a_1,b_1] \times [a_2,b_2]$. 
It follows from tail estimates for Brownian motion that the probability some frog awakened outside $R=[-t^{2/3},t^{2/3}]$ 
from the origin will end up in $Q$ at time $t$ tends to 0. Let ${\cal P}$ be the Poisson point process of frog positions,
and $s(x)$ be the time the frog at $x$ wakes up. The shape theorem implies 
$$
\P\left[ \sup_{x \in {\cal P} \cap R } s(x) \le  C t^{2/3} \right] \to 1.
$$
The expected number of frogs that end up in $Q$ at time $t$ is
\begin{align}
\E|A_t \cap Q| = \int_{a_1}^{b_2} dy_1  \, \int_{a_1}^{b_2} dy_2 \, \sum_{x \in {\cal P} \cap R}  p_{t-s(x)}(x,y) 
\label{absum}
\end{align}
where $p_{t-s(x)}(x,y)$ is the transition probability of Brownian motion.

Let $t_1= t -C t^{2/3}$. Let $W(y)$ denote the sum in \eqref{absum} for fixed $y$. Observe that
$$
\frac{1}{2\pi t} \sum_{x \in {\cal P}\cap Q} e^{-|y-x|^2/2t_1} \leq
W(y) \leq \frac{1}{2\pi t_1} \sum_{x \in {\cal P} \cap Q} e^{-|y-x|^2/2t}
$$
where $|z|$ is the $L^2$ norm. Scaling space by $t^{-1/2}$
$$
\frac{1}{2\pi t} \sum_{x \in t^{-1/2}({\cal P}\cap Q)} e^{-|yt^{-1/2}-x|^2/2(t_1/t)} \leq
W(yt^{-1/2}) \leq \frac{1}{2\pi t_1} \sum_{x \in t^{-1/2}({\cal P} \cap Q)} e^{-|yt^{-1/2}-x|^2/2}. 
$$

Let $a<1/2$ and let $I_{j,k} = [jn^{a},(j+1)n^{a}] \times [kn^{a},(k+1)n^{a}]$. The law of large numbers for the Poisson
process implies that $n^{-2a}|{\cal P} \cap I_{j,k}|\to 1$ in probability and in $L^2$. 
Note that the variance of $n^{-2a}|{\cal P} \cap I_{j,k}| - 1$ is $n^{-2a}$ for all $j,k$. Let $f_t(y) =\exp( - |yt^{-1/2} - z|^2/2 )$ and 
$$
g_{t,n} = \inf_{y\in n^{-1/2} I_{j,k}} f_t(y)\quad\hbox{ on $n^{-1/2} I_{j,k}$}. 
$$
A second moment calculation shows that 
$$
(2\pi t)^{-1} \sum_{z \in t^{-1/2}{\cal P} } g_{t,n}(z) \to \int dy_1 \int dy_2 \, (2\pi)^{-1} e^{-|y|^2/2} = 1
$$
in $L^2$, uniformly in $y$. From this it follows that
\begin{align*}
\E[|A_t \cap Q|] &=  \int_{a_1}^{b_2} dy_1  \, \int_{a_1}^{b_2} dy_2 \, W(y) \, dy \\
& \to  \int_{a_1}^{b_2} dy_1  \, \int_{a_1}^{b_2} dy_2 \, 1  = (b_1-a_1)(b_2-a_2) = |Q|,\\
\end{align*} 
where $|Q|$ is the area of $Q$. 

At this point we have shown expected number of frogs in $Q$ at time $t$, given the initial configuration,
converges to $|Q|$ in probability. Since the awakened frogs follow independent Brownian motions and each has a small
probability of ending up in the interval, a standard Poisson approximation result implies, see e.g., 
\cite[Theorem 3.6.6]{durrett}, that for almost every initial configuration the number of frogs in $Q$ converges in distribution to a Poisson($|Q|$) random variable. 

The extension to the joint convergence of the number of frogs in $k$ disjoint squares now follows from a general result
that does not depend on the details of our situation. 

\begin{lemma} Suppose that for each $n$, $X_{n,i}$, $i \in I_n$ are independent and take values in $\{0,1, \ldots k \}$,

(i) $\max_i P( X_{n,i} \neq 0 ) \to 0$

(ii) for each $j$ $\sum_i P( X_{n,i} = j ) \to \lambda_j$

\noindent
If $N_j = |\{ i : X_{n,i} = j \}|$ then $(N_1, \ldots N_k)$ converge to independent Poisson($\lambda_j$).
\end{lemma}

\begin{proof} It suffices to prove the result when $k=2$. It follows from the standard Poisson convergence theorem that $N_1$
converges in distribution to Poisson($\lambda_1$). Let $J_n = \{ i : X_{n,i}= j \}$. Conditional on $J_n$ the
$X_{n,i}$ with $i \not]\in J_n$ independent and take values in $\{0,2\}$ with new distribution 
$$
\hat P( X_{n,i} = 2 ) = \frac{ P(X_{n,i} = 2)}{ 1 - P( X_{n,i} = 1)}.
$$
The new variables satisfy the hypotheses of the standard Poisson convergence result so conditional on $N_1$, $N_2$ converges
to a Poisson($\lambda_2$)
\end{proof}

Consider a sequence of rectangles $vt + Q$ where $v \in \B(0,\gamma_r(1-\ep))$. Let $R = [-t^{2/3},t^{2/3}]^2$. Again it follows from
tail estimates for Brownian motion that the probability some frog awakened outside $vt+R$ will end up in $vt+Q$ at time $t$ goes to 0.
Let $s(x)$ be the time at which the frog at $x$ wakes up. Let $T_1$ be the time at which the first frog in $vt+R$ is awakened,
and let $T_2$ be the time at which the last frog in $xt+R$ is awakened. Our result about containing a small ball implies that
$P( |T_2-T_1| > t^{0.7} ) \to 1$. The shape theorem implies that $T_1/t \to |v|/\gamma_r$. Translating the previous calculation in
space and time gives the desired result.

\end{proof}

\appendix

\section{Brownian surgery of a Poisson point process}
We would like to show that we obtain a Poisson point process when we run $B_t$ until it hits a point in $\mathcal P$, delete this point, and replace the Brownian sausage carved out with a new Poisson point process. To this end, let $\{G_t\}_{t > 0}$ be a family of open sets in $\Rm^d$ such that
\[
G_0 = \{0\}, \quad \quad G_s \subset G_t, \;\; \text{for}\;\;s < t
\]
and having Lebesgue measure
\[
|G_t| = t, \quad \forall \; t > 0
\]
and such that $|\partial G_t| = 0$ for all $t$, where $\partial G_t$ is the boundary of $G_t$. Let $\mu$ be a Poisson random measure on $\Rm^d$ with uniform unit intensity. Let $\nu$ be an independent Poisson random measure on $\Rm^d$, also with uniform unit intensity. Define a random time
\[
\tau = \inf\{ t \geq 0\;|\; \mu(\overline{G_t}) \geq 1 \},
\]
which is the first time that $\overline{G_t}$ contains one of the poisson points in the $\mu$ process.

Then $\tau$ is exponentially distributed:
\[
\Pm( \tau > s ) = \Pm ( \mu(\overline{G_s}) = 0 ) = e^{-|G_s|} = e^{-s}.
\]
Because the boundary has measure zero, with probability one, we have only one point in $\overline{G_\tau}$:
\[
\mu(\overline{G_\tau}) = 1
\]
and it lies on the boundary $\partial G_\tau$.  Then define a new point process by the measure $\eta$, as follows:
\begin{align}
\eta(A) = \mu(A \cap (\overline{G_\tau})^C) + \nu(A \cap G_\tau) \label{eq:eta}
\end{align}
Thus, we use points of the $\nu$ process inside $G_\tau$, and those of the $\mu$ process in $(\overline{G_\tau})^C$. Notice, the point on the boundary is in neither set $G_\tau$ nor $(\overline{G_\tau})^C$, so it has been excluded in this construction.

\begin{lemma}\thlabel{lem:surgery}
$\eta$ is a Poisson random measure with unit intensity. 
\end{lemma}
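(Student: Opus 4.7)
The plan is to compute the Laplace functional of $\eta$ and show it equals that of a unit intensity Poisson random measure, which will characterize the distribution. Concretely, for a bounded nonnegative measurable $f \colon \Rm^d \to \Rm$ with compact support I want to show
\[
\expE \exp\bigl(-\textstyle\int f \, d\eta\bigr) = \exp\!\Bigl(-\int_{\Rm^d} (1 - e^{-f(x)}) \, dx \Bigr).
\]
Since $\eta(A) = \mu(A \cap (\overline{G_\tau})^C) + \nu(A \cap G_\tau)$ and $|\partial G_\tau| = 0$ almost surely (so the boundary contributes nothing to Lebesgue integrals), it suffices to analyze the two pieces separately and multiply.

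The key structural step is a strong-Markov-type statement for $\mu$: let $\mathcal F_t = \sigma(\mu|_{\overline{G_t}})$ and observe that $\tau$ is an $(\mathcal F_t)$-stopping time. First I would verify, using the monotone convergence of the family $G_t$ and the fact that $t \mapsto |G_t|$ is continuous, that conditionally on $\mathcal F_\tau$ the restriction $\mu|_{(\overline{G_\tau})^C}$ is a unit intensity Poisson random measure on $(\overline{G_\tau})^C$. The cleanest way to see this is to discretize $\tau$ from above by stopping times $\tau_n$ taking finitely many deterministic values, apply the (elementary) fact that conditionally on $\{\tau_n = s\} \in \mathcal F_s$ the Poisson process $\mu$ restricted to the complement of the deterministic set $\overline{G_s}$ is Poisson with unit intensity on that complement (by independence of $\mu$ on disjoint sets), and then pass to the limit using bounded convergence of the Laplace functionals together with $\overline{G_{\tau_n}} \downarrow \overline{G_\tau}$.

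Once this is established, the rest is bookkeeping. Since $\nu$ is independent of $\mu$ and hence of $\tau$ and $\mathcal F_\tau$, conditionally on $\mathcal F_\tau$ the restriction $\nu|_{G_\tau}$ is a unit intensity Poisson random measure on $G_\tau$, independent of $\mu|_{(\overline{G_\tau})^C}$. Adding two independent Poisson random measures on disjoint regions with matching intensities yields a unit intensity Poisson random measure on their union, so the conditional Laplace functional of $\eta$ given $\mathcal F_\tau$ is
\[
\exp\!\Bigl( -\int_{(\overline{G_\tau})^C} (1 - e^{-f}) \, dx - \int_{G_\tau} (1-e^{-f}) \, dx \Bigr) = \exp\!\Bigl(-\int_{\Rm^d} (1 - e^{-f}) \, dx\Bigr),
\]
using $|\partial G_\tau| = 0$. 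This expression is deterministic, so taking expectations gives the claim.

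The main obstacle I anticipate is the strong-Markov step for $\mu$, i.e., rigorously justifying that on $(\overline{G_\tau})^C$ the $\mu$-points are still a fresh unit-intensity Poisson process \emph{conditionally} on everything seen up to the hitting time. The discretization-then-limit argument sketched above should handle this, but care is needed to ensure that the limit of the nested boundaries does not create an exceptional set on which extra $\mu$-points accumulate; the assumption $|\partial G_t| = 0$ combined with right-continuity of $t \mapsto \overline{G_t}$ (from the monotone family) is what prevents this.
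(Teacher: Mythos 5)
Your plan is correct and follows essentially the same route as the paper: condition on the hitting time $\tau$, use independence of a Poisson process on disjoint regions to see that $\mu$ restricted to $(\overline{G_\tau})^C$ is still unit Poisson, note that $\nu|_{G_\tau}$ is an independent unit Poisson refill, and observe that the resulting conditional law of $\eta$ does not depend on the value of $\tau$ (with $|\partial G_\tau|=0$ absorbing the boundary point). The differences are only in packaging — you verify the distribution via a single Laplace-functional identity where the paper computes $\Pm(\eta(A)=k)$ and then checks independence over disjoint sets with characteristic functions — and you are somewhat more explicit than the paper in justifying the conditioning step (your discretization of the stopping time), which the paper simply asserts.
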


\begin{proof}

Conditioning on $\tau$ (which is exponentially distributed), we compute
\begin{align}
\Pm( \eta(A) = k) & = \int_0^\infty \Pm( \eta(A) = k \;|\; \tau = s) e^{-s} \,ds 
\end{align}
Conditioned on $\tau = s$, $\eta(A)$ is Poisson with rate $|A|$, since it is the sum of {\em independent} Poisson random variables
\[
\mu(A \cap (\bar G_s)^C) + \nu(A \cap G_s).
\]
having rates $|A  \cap (\bar G_s)^C|$ and $|A \cap G_s|$, respectively.  Therefore,
\begin{align}
\Pm( \eta(A) = k) & = e^{-|A|} \frac{|A|^k}{k!} \int_0^\infty e^{-s} \,ds = e^{-|A|} \frac{|A|^k}{k!}.
\end{align}
So, $\eta(A)$ is Poisson with rate $|A|$. We now use characteristic functions to show independence. 

Suppose $A$ and $B$ are two disjoint measurable sets in $\Rm^d$; we now show that $\eta(A)$ and $\eta(B)$ are independent. For for any $\theta, \delta \in \Rm$, we have
\begin{align}
\expE[e^{i \theta \eta(A)}e^{i \delta \eta(B)} ] & = \int_0^\infty \expE[e^{i \theta \eta(A)}e^{i \delta \eta(B)} \;|\; \tau = s] e^{-s} \,ds \no \\
& = \int_0^\infty \expE[e^{i \theta (\mu(A \cap (\bar G_s)^C) + \nu(A \cap G_s)} e^{i \delta (\mu(B \cap (\bar G_s)^C) + \nu(B \cap G_s))} \;|\; \tau = s] e^{-s} \,ds 
\end{align}
The terms in the expectation are independent since the respective sets are all disjoint:
\begin{align}
& \expE[e^{i \theta (\mu(A \cap (\bar G_s)^C) + \nu(A \cap G_s)} e^{i \delta (\mu(B \cap (\bar G_s)^C) + \nu(B \cap G_s))} \;|\; \tau = s]  \no \\
& \quad \quad = \expE[e^{i \theta \mu(A \cap (\bar G_s)^C)}] \expE[ e^{i \theta \nu(A \cap G_s)}] \expE[ e^{i \delta \mu(B \cap (\bar G_s)^C)}] \expE[ e^{i \delta \nu(B \cap G_s))}] \no \\
& \quad \quad = e^{|A \cap (\bar G_s)^C| (e^{\theta} - 1)}  e^{|A \cap G_s| (e^{\theta} - 1)}e^{|B \cap (\bar G_s)^C| (e^{\delta} - 1)}  e^{|B \cap G_s| (e^{\delta} - 1)} \no \\
& \quad \quad = e^{|A| (e^{\theta} - 1)} e^{|B| (e^{\delta} - 1)} 
\end{align}
which is independent of $s$. The last expression is the product of the characteristic functions of two Poissons. This shows that
\[
\expE[e^{i \theta \eta(A)}e^{i \delta \eta(B)} ] =  e^{|A| (e^{\theta} - 1)} e^{|B| (e^{\delta} - 1)} = \expE[ e^{i \theta \eta(A)}] \expE[e^{i \delta \eta(B)}].
\]
This implies that $\eta(A)$ and $\eta(B)$ are independent.

\end{proof}




\bibliographystyle{alpha}
\bibliography{BM_frogs}

\subsection*{Acknowledgments} \noindent Thanks to Si Tang for her  useful suggestions
 and to James Nolen for his assistance with the argument in the appendix.


\end{document}